\newtheorem{theorem}{Theorem}[section]
\newtheorem{corollary}[theorem]{Corollary}
\newtheorem{lemma}[theorem]{Lemma}
\newtheorem{definition}[theorem]{Definition}
\newtheorem{rem}[theorem]{Remark}
\begin{document}


\title[$G$-Fredholm Property]
{The $G$-Fredholm Property
of \\ the $\bar\partial$-Neumann Problem}
\author{Joe J Perez}
\maketitle
\section{Introduction}

Let $\mathcal H_1$ and $\mathcal H_2$ be Hilbert spaces and let $\mathcal B(\mathcal H_1,\mathcal H_2)$ be the space of bounded linear operators $A:\mathcal H_1\to\mathcal H_2$. An operator $A\in\mathcal B(\mathcal H_1,\mathcal H_2)$ is said to be {\it Fredholm} if first, the kernel of $A$ is finite-dimensional, and second the image of $A$ is closed and has finite codimension. An application of the open mapping theorem shows that the closedness requirement on the image is redundant. 
A well-known example of Fredholm operators (F. Riesz): if $C$ is a compact operator then ${\bf 1}-C$ is Fredholm. It is easy to see that the Fredholm property is equivalent to invertiblility modulo finite-rank operators or compact operators.

For a Fredholm operator $A$ its {\it index} is defined by

\[{\rm ind} (A) = \dim_{\mathbb C}(\ker A) - \dim_{\mathbb C}({\rm coker} A).\]

\noindent
The set of all Fredholm operators in $\mathcal B(\mathcal H_1,\mathcal H_2)$ is an open set in the norm topology of $\mathcal B(\mathcal H_1,\mathcal H_2)$ and the index is locally constant in this set. This means that the index is stable under perturbations that are small with respect to the operator norm. This stability suggests that it might be possible to calculate the index in some concrete analytic situation.

The main example of such a situation is given by elliptic differential operators acting in sections of vector bundles over compact manifolds. Choosing $\mathcal H_1$ and $\mathcal H_2$ to be appropriate Sobolev spaces of these sections, we find that elliptic operators are Fredholm. Probably the most important result concerning these elliptic operators is the Atiyah-Singer index theorem, which gives the index of the operator in terms of some characteristic classes involving its principal symbol, \cite {BGV}.

Let $M$ be a noncompact manifold (possibly with boundary) and $A$ an elliptic differential operator on $M$. Then $A$ is not necessarily Fredholm. That is, the kernel and/or cokernel of $A$ may be infinite-dimensional and/or the image of $A$ may not be closed. In particular, the index as defined above may not be well-defined, but there are notions generalizing the Fredholm property and the index. In this paper we will use one of these generalized Fredholm properties that makes sense when there is a free action of a unimodular Lie group $G$ on $M$ with quotient $X=M/G$, a compact manifold. Making appropriate choices of metric on $M$ and in the vector bundles over $M$ and using a Haar measure on $G$, we obtain Hilbert spaces of sections on which the $G$-action is unitary. This action allows us to define an trace ${\rm tr}_G$ in the algebra of operators commuting with the action of $G$. Restricting this trace to orthogonal projections $P_L$ onto $G$-invariant subspaces $L$ provides a dimension function $\dim_G$ given by 

\[\dim_G(L) = {\rm tr}_G(P_L).\]

Generalizing the previous definition, a $G$-invariant operator $A:\mathcal H_1\to\mathcal H_2$ is said to be $G$-{\it Fredholm} if $\dim_G\ker A<\infty$  and if there exists a closed, invariant subspace $Q\subset {\rm im}(A)$ so that $\dim_G(\mathcal H_2\ominus Q)<\infty$. With this definition, we prove the following:

\begin{theorem}  Let $M$ be a complex manifold with boundary which is strongly pseudoconvex. Let $G$ be a unimodular Lie group acting freely by holomorphic transformations on $M$ so that $M/G$ is compact. Then, for $q>0$, the Kohn Laplacian $\square$ in $L^2(M,\Lambda^{p,q})$ is $G$-Fredholm.\end{theorem}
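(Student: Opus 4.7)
The plan is to transcribe the classical proof of Fredholmness of $\square$ on a compact strongly pseudoconvex domain into the $G$-invariant, $\mathrm{tr}_G$-setting of the introduction. First I would fix a $G$-invariant Hermitian metric on $M$, which exists because $M/G$ is compact (average a local choice by a $G$-invariant partition of unity). On $L^2(M,\Lambda^{p,q})$ the representation of $G$ is then unitary, the closed densely defined form
\[
Q(u,v)=(\bar\partial u,\bar\partial v)+(\bar\partial^*u,\bar\partial^*v),\qquad u,v\in\mathrm{dom}(\bar\partial)\cap\mathrm{dom}(\bar\partial^*),
\]
is $G$-invariant, and its associated self-adjoint operator is precisely $\square$.

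\emph{Step 1: globalising Kohn's subelliptic estimate.} Strong pseudoconvexity is a pointwise $G$-invariant condition on the Levi form of $\partial M$, and Kohn's local analysis produces, for each point $x\in\overline{M}$, a neighbourhood $U_x$ and a constant $C_x$ with
\[
\|u\|_{1/2}^2 \le C_x\bigl(Q(u,u)+\|u\|^2\bigr)
\]
for $u$ supported in $U_x$ with $q\ge 1$. Passing to $M/G$ and using its compactness, I cover $M/G$ by finitely many such neighbourhoods, lift to a $G$-invariant locally finite cover of $M$ with uniform constant, and combine via a $G$-invariant partition of unity. Since $[\square,\varphi]$ is of strictly lower order than $\square$, the local estimates assemble into a global subelliptic estimate $\|u\|_{1/2}^2 \le C(Q(u,u)+\|u\|^2)$ on all of $M$ for every $q\ge 1$.

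\emph{Step 2: $G$-Rellich and the spectral conclusion.} By the $G$-analogue of Rellich's theorem, compactness of $M/G$ implies that any $G$-invariant positive-order Sobolev inclusion has finite local $\mathrm{tr}_G$; combined with the $1/2$-estimate this shows that the form domain of $Q$ with its graph norm embeds $G$-compactly into $L^2(M,\Lambda^{p,q})$, i.e.\ every spectral projection $E_{[0,\lambda]}(\square)$ lies in the $\mathrm{tr}_G$-finite ideal of the von Neumann algebra of $G$-invariant bounded operators. In particular $\dim_G\ker\square<\infty$. For any small $\varepsilon>0$ the restriction of $\square$ to $E_{[\varepsilon,\infty)}(\square)L^2$ is bounded below, so its image $V:=\square\bigl(E_{[\varepsilon,\infty)}(\square)L^2\bigr)$ is a closed $G$-invariant subspace of $\mathrm{im}(\square)$ whose orthogonal complement is $E_{[0,\varepsilon)}(\square)L^2$, of finite $G$-dimension. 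Hence $\square$ is $G$-Fredholm.

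\emph{Main obstacle.} The technical heart of the argument is Step 2: promoting the usual proof that a positive-order Sobolev embedding on a compact manifold is compact to a statement in the semifinite von Neumann algebra of $G$-invariant bounded operators, at the fractional order $s=1/2$ and in the presence of the Neumann boundary condition. This requires carefully matching $G$-invariant cutoffs with the tangential microlocalisation used in Kohn's estimate and checking that the resulting locally-$\mathrm{tr}_G$-finite structure behaves correctly across $\partial M$; once this is in place the rest of the proof is a direct adaptation of the compact case.
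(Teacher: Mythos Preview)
Your overall architecture is correct and your endgame matches the paper's: once you know that each spectral projection $E_{[0,\delta]}(\square)$ has finite $G$-trace, you take $Q=\mathrm{im}\bigl({\bf 1}-E_{[0,\delta]}\bigr)\subset\mathrm{im}(\square)$ and read off both conditions of $G$-Fredholmness. The route to that finiteness, however, is genuinely different. You stop at the global $\tfrac12$-estimate and invoke an abstract $G$-Rellich lemma (the Sobolev inclusion $H^{1/2}\hookrightarrow L^2$ is $G$-compact when $M/G$ is compact), which immediately forces bounded spectral projections into the $\mathrm{Tr}_G$-finite ideal. The paper avoids $G$-Rellich entirely: it bootstraps the local subelliptic estimate all the way up to a global \emph{a~priori} bound $\|u\|_{s+1}\lesssim\|\square u\|_s+\|u\|$ for every $s$ (this is the appendix, where the Folland--Kohn estimates are carefully re-localised so that the error terms are themselves cut off), concludes $\mathrm{im}(E_{[0,\delta]})\subset H^\infty(M)$, and then argues directly with the Schwartz kernel: $K_P$ is smooth, invariance lets it descend to a function on the compact space $(M\times M)/G$, and one computes $\mathrm{Tr}_G(P)=\int_{(M\times M)/G}|K_P|^2<\infty$. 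Your approach is more economical---only the $\tfrac12$-gain is needed, and the globalisation of that estimate really is a clean partition-of-unity argument since $[\bar\partial,\varphi]$ and $[\vartheta,\varphi]$ are zeroth order---but it front-loads the analysis into the $G$-Rellich step you flag as the obstacle. The paper's approach trades that abstraction for a longer but entirely explicit regularity-plus-kernel computation, with compactness of $M/G$ entering only at the end as finiteness of an honest integral.
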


\begin{corollary} If $M$ is as before and $q>0$, then the reduced Dolbeault cohomologies $H^{p,q}(M)$ have finite $G$-dimension.\end{corollary}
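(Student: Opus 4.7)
The plan is to identify the reduced $L^2$-Dolbeault cohomology $H^{p,q}(M)$ with the space $\ker\square$ of harmonic $(p,q)$-forms, and then read off the finite $G$-dimension directly from the theorem.

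First I would invoke the abstract Hodge--Kodaira orthogonal decomposition
\[L^2(M,\Lambda^{p,q}) \;=\; \ker\square \;\oplus\; \overline{\mathrm{im}\,\bar\partial} \;\oplus\; \overline{\mathrm{im}\,\bar\partial^{\,*}},\]
where $\bar\partial$ is taken with its maximal closed realization, $\bar\partial^{\,*}$ is its Hilbert space adjoint (whose domain encodes the $\bar\partial$-Neumann boundary condition), and $\square$ is the associated self-adjoint Laplacian. This decomposition is a general functional-analytic consequence of the fact that $\bar\partial$ sits in a Hilbert complex, and in particular it does \emph{not} require the image of $\bar\partial$ itself to be closed.

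Next I would intersect the decomposition with $\ker\bar\partial$. Since $\overline{\mathrm{im}\,\bar\partial^{\,*}}$ is orthogonal to $\ker\bar\partial$, one obtains
\[\ker\bar\partial \;=\; \ker\square \;\oplus\; \overline{\mathrm{im}\,\bar\partial}\]
inside $L^2(M,\Lambda^{p,q})$, and hence a canonical isomorphism
\[H^{p,q}(M) \;=\; \ker\bar\partial \big/ \overline{\mathrm{im}\,\bar\partial} \;\cong\; \ker\square.\]
Because $\bar\partial$, $\bar\partial^{\,*}$ and the $L^2$-inner product all commute with the $G$-action, this isomorphism is $G$-equivariant. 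The theorem asserts that for $q>0$ the Kohn Laplacian $\square$ is $G$-Fredholm, so by definition $\dim_G\ker\square<\infty$, and the corollary follows.

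The only step of substance is the Hodge--Kodaira decomposition; everything else is formal. The main item to pin down is that the self-adjoint realization of $\square$ used to prove the theorem is precisely the Friedrichs-type extension associated with the quadratic form $u\mapsto \|\bar\partial u\|^2 + \|\bar\partial^{\,*} u\|^2$, so that the identifications $\ker\square = \ker\bar\partial\cap\ker\bar\partial^{\,*}$ and $(\ker\bar\partial)^\perp = \overline{\mathrm{im}\,\bar\partial^{\,*}}$ both hold. Once these compatibilities between the operator-theoretic and the complex-geometric data are recorded, the corollary is an immediate consequence of the theorem.
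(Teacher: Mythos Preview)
Your proposal is correct and is essentially identical to the paper's argument: the paper records the Hodge--Kodaira decomposition as a lemma (yielding $L^2\bar H^{p,q}(M)\cong\ker\square_{p,q}$ as Hilbert $G$-modules) and then simply cites the $G$-Fredholm property of $\square$ to conclude $\dim_G\ker\square<\infty$. Your remarks about the Friedrichs realization of $\square$ match the setup the paper uses in its treatment of the $\bar\partial$-Neumann problem.
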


\begin{corollary}\label{penult} If $M$ is as before, let $L\subset(\ker\bar\partial)^\perp$ be closed and $G$-invariant. Then $\bar\partial|_L:L\to \overline{\bar\partial L}$ is $G$-Fredholm.\end{corollary}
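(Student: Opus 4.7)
The plan is to deduce this from the closed-range property for $\bar\partial$ that is implicit in Theorem~1.1. Since $q+1 \geq 1$, Theorem~1.1 applied in bidegree $(p,q+1)$ gives that the Kohn Laplacian $\square$ is $G$-Fredholm on $L^{2}(M,\Lambda^{p,q+1})$, so in particular $\square$ has closed range. Because $\square = \bar\partial\bar\partial^{*} + \bar\partial^{*}\bar\partial$ is self-adjoint and commutes with the $G$-action, the standard Hodge--Kohn argument produces the $G$-equivariant orthogonal decomposition
\[
L^{2}(M,\Lambda^{p,q+1}) \;=\; \ker\square \;\oplus\; {\rm ran}(\bar\partial) \;\oplus\; {\rm ran}(\bar\partial^{*}),
\]
with both range summands closed. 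In particular, $\bar\partial: L^{2}(M,\Lambda^{p,q}) \to L^{2}(M,\Lambda^{p,q+1})$ has closed range.

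Closed range, combined with injectivity of $\bar\partial$ on $(\ker\bar\partial)^{\perp} \cap {\rm dom}(\bar\partial)$, yields, via the open mapping theorem applied to the graph-norm Hilbert space, a uniform lower bound $\|\bar\partial u\|_{L^{2}} \geq c\,\|u\|_{L^{2}}$ for every $u \in (\ker\bar\partial)^{\perp} \cap {\rm dom}(\bar\partial)$. Given a closed, $G$-invariant $L \subset (\ker\bar\partial)^{\perp}$, I would combine this estimate with closedness of the operator $\bar\partial$ to show that ${\rm im}(\bar\partial|_{L}) = \bar\partial(L \cap {\rm dom}(\bar\partial))$ is closed in $L^{2}(M,\Lambda^{p,q+1})$: if $\bar\partial u_{n} \to y$ with $u_{n} \in L \cap {\rm dom}(\bar\partial)$, then the lower bound makes $\{u_{n}\}$ Cauchy in $L$, hence convergent to some $u \in L$, and closedness of $\bar\partial$ forces $u \in {\rm dom}(\bar\partial)$ with $\bar\partial u = y$. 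Hence $\overline{\bar\partial L} = \bar\partial L$.

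It then remains to observe that $\ker(\bar\partial|_{L}) = L \cap \ker\bar\partial = \{0\}$, because $L \subset (\ker\bar\partial)^{\perp}$, so $\dim_{G}\ker(\bar\partial|_{L}) = 0$, and that taking $Q = {\rm im}(\bar\partial|_{L}) = \overline{\bar\partial L}$ yields $\dim_{G}(\overline{\bar\partial L} \ominus Q) = 0$. Both $G$-Fredholm conditions are then trivially satisfied. The principal obstacle is the first step, extracting closed range of $\bar\partial$ itself from the $G$-Fredholm property of $\square$; once this is in hand, the remainder is pure functional analysis. The only delicate point in the $G$-setting is verifying the equivariance of the Hodge--Kohn decomposition, but this is automatic since $\square$ is self-adjoint and commutes with $G$.
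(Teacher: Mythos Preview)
Your argument has a genuine gap at the very first step: the $G$-Fredholm property of $\square$ does \emph{not} imply that $\square$ has closed range. The definition of $G$-Fredholm only guarantees that ${\rm im}(\square)$ contains some closed $G$-invariant subspace $Q$ of finite $G$-codimension; it says nothing about ${\rm im}(\square)$ itself being closed. In the von Neumann algebra setting this is a real phenomenon: a self-adjoint invariant operator whose spectral projections $E_{[0,\delta]}$ all have finite $G$-trace is $G$-Fredholm, yet if $0$ lies in the continuous spectrum the range is not closed. The paper's Lemma \ref{ac} gives only that ${\rm im}(\square)$ is \emph{almost closed} ($G$-dense in its closure), and the Hodge decomposition in Lemma \ref{decomp} is stated with closures $\overline{{\rm im}\,\bar\partial}\oplus\ker\square\oplus\overline{{\rm im}\,\bar\partial^*}$ precisely because closed range is not available. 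Consequently your lower bound $\|\bar\partial u\|\ge c\|u\|$ on $(\ker\bar\partial)^\perp$ is unjustified, and the conclusion that $\bar\partial L$ is closed does not follow.

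The paper's proof proceeds differently. From the $G$-Fredholm property of $\square$ on $(0,1)$-forms one gets that ${\rm im}(\square)$ is almost closed; Lemma \ref{gdens} then shows ${\rm im}(\square)\cap\overline{{\rm im}(\bar\partial)}$ is $G$-dense in $\overline{{\rm im}(\bar\partial)}$, and the Hodge decomposition forces ${\rm im}(\square)\cap\overline{{\rm im}(\bar\partial)}\subset{\rm im}(\bar\partial)$. Hence ${\rm im}(\bar\partial)$ is itself almost closed, which together with trivial kernel makes $\bar\partial|_{(\ker\bar\partial)^\perp}$ $G$-Fredholm into $\overline{{\rm im}(\bar\partial)}$. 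The passage to an arbitrary closed invariant $L\subset(\ker\bar\partial)^\perp$ is then Lemma \ref{fredrest}. The point is that one must work throughout with $G$-density and almost-closedness rather than with genuine closed range.
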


\begin{rem}{\rm Examples of manifolds satisfying the hypotheses of the theorem are Grauert tubes of unimodular Lie groups. The unimodularity of $G$ is necessary for the definition of the $G$-Fredholm property.}\end{rem}

The $\bar\partial$-Neumann problem was proposed by Spencer in the 1950s as a method of obtaining existence theorems for holomorphic functions. Morrey in \cite{Mo} introduced the key {\it basic estimate} and the problem was solved by Kohn in \cite{K}. We use variants of the techniques in \cite{FK} in this work.

This generalized Fredholm property was first introduced in an abstract setting by M. Breuer \cite{B}.  In an analytical context, it was first used by L. Coburn, R. Moyer and I.M. Singer \cite{CMS} to define and calculate the real-valued index of elliptic almost-periodic pseudodifferential operators in $\mathbb R^{n}$.  Similarly, M. Atiyah \cite{A} defined and computed the real-valued index of elliptic operators on covering spaces of compact manifolds.  B. Fedosov and M. Shubin \cite{FS} working analytically (without Breuer's theory) defined and calculated the index of random elliptic operators in $\mathbb R^{n}$.  A. Connes and H. Moscovici \cite{CM} proved an $L^{2}$-index theorem for homogeneous spaces of noncompact Lie groups. Also, in \cite{S} M. Shubin used similar techniques to obtain an $L^2$-Riemann-Roch theorem for elliptic operators. In all this work, an important part of the analysis consists of showing that the operators under consideration have the property stressed previously: their images contain closed, invariant subspaces with finite codimension in an appropriate sense. In \cite{GHS}, $\Gamma$ is taken to be a discrete group and it is shown that the Kohn Laplacian $\square$ is $\Gamma$-Fredholm. Note that the natural boundary value problem for $\square$ (called the $\bar\partial$-Neumann problem) is not elliptic, but only subelliptic. In the present paper we extend this result from \cite{GHS} to the situation in which $G$ is a unimodular Lie group. When $G$ has a discrete cocompact subgroup $\Gamma \subset G$ the $\Gamma$-Fredholm property easily implies the $G$-Fredholm property.  Generically, however, it is not the case that a unimodular Lie group have such a subgroup, {\it cf.} \cite{M}. Using different methods, questions posed in \cite{GHS} have been answered and some results there strengthened in \cite{Br} and \cite{TCM}.

In section 2 we will introduce the $G$-trace for invariant operators in Hilbert $G$-modules.  Section 3 contains a description of abstract $G$-Fredholm operators and several useful properties. Section 4 treats the relevant results from the theory of the $\overline \partial$-Neumann problem.  In section 5 we discuss Hodge theory which links analytic results we obtain for $\square$ to the reduced $L^{2}$ Dolbeault cohomology of $M$.  In section 6 we prove that $\square$ is $G$-Fredholm and deduce the finite-dimensionality of the reduced Dolbeault cohomology for $q>0$. We also explore some easy consequences of the main theorem regarding the operator $\bar\partial$ on functions.


\section{Preliminaries}

 A {\it Hilbert $G$-module} is a Hilbert space with a (left) strongly continuous unitary action of $G$.  A {\it  free} Hilbert $G$-module is a Hilbert $G$-module which is unitarily and $G$-equivariantly isomorphic to the Hilbert space tensor product $L^{2}(G)\otimes \mathcal H$, where $\mathcal H$ is a Hilbert space and the Hilbert module structure is given by the  action of $G$ given by $s\mapsto R_{s}\otimes {\bf 1}_{\mathcal H}$,  where $R_s:L^2(G)\to L^2(G)$ is induced by the right translation $t\mapsto ts$ on $G$. A {\it projective} Hilbert $G$-module is a Hilbert $G$-module that can be embedded isometrically and $G$-equivariantly into a free Hilbert $G$-module.  Later on we will denote by $R_s$ the operator of the  action of $s\in G$ on arbitrary Hilbert $G$-modules. We will only need projective $G$-modules in this work, so from now on projective Hilbert $G$-modules will be called simply Hilbert $G$-modules.
    
  If there is an action of a group $G$ in a Hilbert space $\mathcal H$, denote the space of $G$-equivariant bounded linear operators in $\mathcal H$ by $\mathcal B(\mathcal H)^G$.  In other words $P\in \mathcal B(\mathcal H)^G$ if $P\in{\mathcal B}(\mathcal H)$  and  $R_{s}P=PR_{s}$ for every $s\in G$. An example of a projective Hilbert $G$-module is the image of a projection $P \in \mathcal B(L^2(M))^G$.

We will describe the Hilbert $G$-modules important to our discussion later, but first we restrict our attention to invariant operators on the group. Then we will define the $G$-invariant trace we actually need in the invariant operators on $L^2(M)$.   For any $s\in G$ define left and right translations $L_s, R_s:L^2(G)\to L^2(G)$ by
    $(L_{s}u)(t) = u(s^{-1}t)$, $(R_su)(t)=u(ts)$.  For $f \in L^{1}(G)$ and $u\in L^{2}(G)$, let  
 \[
 (L_{f}u)(t) = \int_{G}f(s)(L_{s}u)(t)ds = \int_{G}f(s)u(s^{-1}t)ds.
 \]
\noindent
The set $\{L_{f}\mid f\in L^{1}(G)\}$ forms an associative algebra of bounded operators 
in $L^2(G)$ which are right-invariant ({\it i.e.} commute with right translations).  
Define $\mathcal L_G\subset \mathcal B(L^{2}(G))$ to be the weak closure of this algebra.  Then $\mathcal L_{G}$ is a von Neumann algebra.  We will also need to consider operators $L_{f}$ for $f\in L^{2}(G)$.  These are defined on $C^{\infty}_{c}(G)$ and we may try to extend them  by continuity to $L^{2}(G)$. This is not always possible, but we  will be concerned only with those $L_{f}$ 
which are bounded, or, equivalently, can be extended to bounded linear operators in $L^{2}(G)$. 
The extended operator will be still denoted $L_f$ and it is then right-invariant  and belongs to
$\mathcal L_{G}$. It follows from the Schwartz kernel theorem that any bounded right-invariant operator in $L^2(G)$ can be presented in the form $L_f$ for a distribution $f$ on $G$.

%

We will need the following fact from about group von Neumann algebras ({\it cf.} \cite{P}, sections 5.1 and 7.2). There is a unique trace  ${\rm tr}_{G}$
on $\mathcal L_G \subset \mathcal B (L^2(G))$ agreeing with
\[ 
{\rm tr}_G (L_{f}^*L_{f}) = \int_G |f(s)|^2 ds, 
\]    
whenever $L_{f}\in \mathcal B (L^{2}(G))$ and $f\in L^2(G).$  Furthermore, 
${\rm tr}_G(A^* A)<\infty$ if and only if there is an $f \in L^2(G)$ for which 
$A= L_{f}\in \mathcal B(L^2(G))$.  
If we define $\tilde f(t) = \overline f(t^{-1})$, and  if $f_{k}, g_{k}\in L^2(G)$, $k=1,\dots, N$, 
then the operator $L_{h}= \sum_{1}^N L_{\tilde f_k} L_{g_k}$ is in ${\rm Dom}({\rm tr}_G)$.  
Furthermore, $h$ is continuous and ${\rm tr}_G(L_{h}) = h(e)$.

\begin{rem}{\rm The unimodularity of the group is necessary for the trace property of} ${\rm tr}_G$. \end{rem}

 Now we bring our results on the group up to the manifold. Let $G$ be a Lie group and $G\to M \overset{p}\to X$ be a principal $G$-bundle with compact base $X$.  In particular, this means that we have a free right action of 
    $G$ on $M$ with quotient space $X$, and $p:M\to X$ is the canonical projection.   Having a smooth free action 
 of $G$ on a manifold $M$ with a $G$-invariant measure $d{\bf x}$, 
 and fixing a Haar measure $dt$ on $G$, we obtain a natural quotient measure $dx$ on
 $X=M/G$ which allows us to present the Hilbert $G$-module $L^2(M)$ in the form
 \[
 L^{2}(M)\cong L^{2}(G)\otimes L^{2}(X),
 \]
 which makes it a free Hilbert $G$-module. It follows that we have
a decomposition of the von Neumann algebra of bounded invariant operators
 \[
 \mathcal B(L^{2}(M))^{G}\cong 
 \mathcal B(L^{2}(G))^{G}\otimes \mathcal B(L^{2}(X))\cong 
 \mathcal L_{G}\otimes \mathcal B(L^{2}(X)),
 \] 
where we have made the identification $\mathcal L_{G}\cong \mathcal B(L^{2}(G))^{G}$.  
In order to measure the invariant subspaces of $L^{2}(M)$, we need a trace on
$\mathcal L_{G}\otimes \mathcal B(L^{2}(X))$. It occurs that there exists a natural 
normal, faithful, semifinite trace on this algebra. It is denoted ${\rm Tr}_G$
and formally presented in the form
\[{\rm Tr}_G={\rm tr}_G\otimes {\rm Tr},\]
where $\rm Tr$ is the usual trace on $\mathcal B(L^{2}(X))$.
We describe the trace ${\rm Tr}_G$ in more detail. Let $(\psi_l)_{l\in\mathbb N}$ be an orthonormal basis for $L^2(X)$. Then
  \[
  L^2(M) \cong L^{2}(G)\otimes L^2(X)  
   \cong \bigoplus_{l\in \mathbb N} L^2(G)\otimes \psi_{l}.
   \]
    Denoting by $P_m$ the projection onto the $m^{th}$ summand, we obtain 
a matrix representation of $A\in \mathcal B(L^{2}(M))$ with elements $A_{lm} = P_l A P_m \in \mathcal B(L^{2}(G))$.  If $A\in \mathcal B(L^2(M))^G$, then these matrix elements are invariant operators in $L^{2}(G)$, and there exist distributions $h_{lm}$ on $G$ so that $A\in \mathcal B(L^{2}(M))^{G}$ has a matrix representation
    \begin{equation}\label{deco}
    A \leftrightarrow [A_{lm}]_{lm}=[L_{h_{lm}}]_{lm}.
    \end{equation}
    \begin{definition}   
    For positive $A\in  \mathcal B(L^{2}(M))^{G}$ define 
    \[{\rm Tr}_G (A) = \sum_{l\in \mathbb N} \ {\rm tr}_G (A_{ll}).
    \]
 \end{definition}

    The functional ${\rm Tr}_G$ is a normal, faithful, and semifinite trace and is independent of the basis $(\psi_{l})_{l}$ used in its construction, {\it cf.} Section V.2 of \cite{T}. We define the $G$-{\it Hilbert-Schmidt} operators
\begin{equation}\label{domtra}
{\rm Dom}_{1/2}({\rm Tr}_G)=\{A\in \mathcal B(L^{2}(M))^{G}\mid {\rm Tr}_G(A^{*}A)<\infty\}.
\end{equation}  
Also, define the $G$-{\it trace-class}, ${\rm Dom}({\rm Tr}_G)$, to be the vector space of finite linear combinations of the form $A^*B$, where $A,B\in {\rm Dom}_{1/2}({\rm Tr}_G)$.

\begin{rem}{\rm If $L$ is an arbitrary (projective) Hilbert $G$-module, then $L$ is the image of a $G$-invariant orthogonal projection $P$ in $L^{2}(G)\otimes \mathcal H$.  Thus the trace ${\rm Tr}_G$ on $L^{2}(G)\otimes \mathcal H$ restricts to one on $L$ defined by $A\mapsto {\rm Tr}_G(PAP)$.} \end{rem}

We will have to describe smoothness of functions, forms, and sections of vector bundles using $G$-invariant Sobolev spaces which we describe here. The $G$ action induces an invariant Riemannian metric on $M$ so that with respect to this structure $M$ has bounded geometry. As in \cite{G} and \cite{S1} we may construct appropriate partitions of unity and, with local geodesic coordinates, assemble $G$-invariant integer Sobolev spaces $H^s(M)$. If $E$ is a vector $G$-bundle over $M$, then we may introduce a $G$-invariant inner product structure on $E$. Together with the $G$-invariant measure on $M$ that we have described previously, we define the Hilbert spaces of sections of $E$ which we denote $H^s(M,E)$, for $s=0,1,2,\dots$. Because $X=M/G$ is compact, the spaces $H^s(M,E)$ do not depend on the choices of invariant metric on $M$ or of invariant inner product on $E$. Note that, in particular, spaces of sections in natural tensor bundles on a $G$-manifold have natural, invariant Sobolev structures.

\section{$G$-Fredholm Operators}

  We will explain and modify a generalized notion of the Fredholm property as introduced in \cite{B} in the setting of bounded operators in arbitrary von Neumann algebras.  By using the graph norm on the domain of the operator, it is easy to extend the results in \cite{B} to closed, densely defined operators as in \cite{S}. There, the von Neumann algebras in question were of invariant operators on Hilbert $\Gamma$-modules with $\Gamma$ a discrete group. Here we make the trivial extension to von Neumann algebras of invariant operators acting in Hillbert $G$-modules where $G$ is a unimodular Lie group rather than a discrete group. Also we will describe and utilize the property called $\Gamma$-density which was introduced and exploited in \cite{S}. A lemma regarding restrictions of Fredholm operators is proven here. 
  
  \begin{lemma}\cite{GHS}(2.1) Let $L$ be a Hilbert $G$-module and $L_{1}, L_{2}$ two Hilbert submodules of $L$ such that ${\rm dim}_{G}L_{1}>{\rm codim}_{G}L_{2}$ where the codimension means the dimension of the orthogonal complement of $L_{2}$ in $L$.  Then $L_{1}\cap L_{2}\neq \{0\}$ and ${\rm dim}_{G}L_{1}\cap L_{2} \ge {\rm dim}_{G}L_{1}-{\rm codim}_{G}L_{2}$.\end{lemma}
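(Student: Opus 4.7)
The plan is to reduce the statement to the additivity of $G$-dimension under polar decomposition, applied to a suitable $G$-equivariant bounded operator whose kernel is exactly $L_1 \cap L_2$. Let $P_i$ denote the orthogonal projection of $L$ onto $L_i$ for $i=1,2$; both are $G$-equivariant since $L_1, L_2$ are Hilbert $G$-submodules. Consider the bounded $G$-equivariant operator
\[
A \;=\; (I - P_2)\big|_{L_1} \;:\; L_1 \longrightarrow L_2^\perp.
\]
The identity $Ax = x - P_2 x$ shows immediately that $\ker A = \{x \in L_1 : P_2 x = x\} = L_1 \cap L_2$, and that $\overline{\mathrm{im}\,A} \subseteq L_2^\perp$.

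Next I would invoke the polar decomposition $A = V|A|$ of this $G$-equivariant operator: because $A$ commutes with the $G$-action, so do $|A|$ and the partial isometry $V$, the latter satisfying $V^*V = P_{(\ker A)^\perp \cap L_1}$ and $V V^* = P_{\overline{\mathrm{im}\,A}}$, where both projections lie in the von Neumann algebra $\mathcal{B}(L)^G$. The trace property of $\mathrm{Tr}_G$ (transported to $L$ as in the Remark following the definition of the $G$-trace-class) then gives $\mathrm{Tr}_G(V^*V) = \mathrm{Tr}_G(VV^*)$, i.e.
\[
\dim_G L_1 - \dim_G(L_1 \cap L_2) \;=\; \dim_G\overline{\mathrm{im}\,A}.
\]
Monotonicity of $\dim_G$ under inclusion of Hilbert $G$-submodules, a direct consequence of faithfulness and positivity of $\mathrm{Tr}_G$ on projections, then yields
\[
\dim_G \overline{\mathrm{im}\,A} \;\le\; \dim_G L_2^\perp \;=\; \mathrm{codim}_G L_2,
\]
so that $\dim_G(L_1 \cap L_2) \ge \dim_G L_1 - \mathrm{codim}_G L_2$. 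Under the hypothesis $\dim_G L_1 > \mathrm{codim}_G L_2$ the right-hand side is strictly positive; since $\dim_G M = 0$ iff $M = \{0\}$ (faithfulness of $\mathrm{Tr}_G$), this forces $L_1 \cap L_2 \neq \{0\}$.

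The only non-trivial input is the additivity statement $\dim_G L_1 = \dim_G \ker A + \dim_G \overline{\mathrm{im}\,A}$ for $G$-equivariant bounded operators between Hilbert $G$-modules; this is the main obstacle only in that it must be set up carefully. It is a standard consequence of polar decomposition inside the semifinite von Neumann algebra $\mathcal{B}(L)^G$, together with the Murray--von Neumann equivalence $V^*V \sim VV^*$ and the trace property of $\mathrm{Tr}_G$. Everything else, including restriction of $\mathrm{Tr}_G$ to the compressed algebra $P \mathcal{B}(L^2(G)\otimes \mathcal H)^G P$, is already in place in Section 2. No further input from the $\bar\partial$-Neumann theory is required for this lemma.
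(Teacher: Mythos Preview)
The paper does not supply its own proof of this lemma; it is merely quoted from \cite{GHS}, Lemma~2.1, without argument. Your proof via the polar decomposition of the $G$-equivariant operator $A=(I-P_2)|_{L_1}:L_1\to L_2^\perp$ is correct and is the standard route to such Kaplansky-type inequalities in the semifinite setting: the Murray--von Neumann equivalence $V^*V\sim VV^*$ together with the trace property of $\mathrm{Tr}_G$ yields $\dim_G\big(L_1\ominus(L_1\cap L_2)\big)=\dim_G\overline{\mathrm{im}\,A}\le \mathrm{codim}_G L_2$, and faithfulness of $\mathrm{Tr}_G$ on projections gives the nontriviality of $L_1\cap L_2$. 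One small point worth making explicit is that the hypothesis $\dim_G L_1>\mathrm{codim}_G L_2$ forces $\mathrm{codim}_G L_2<\infty$, so the subtraction in the displayed identity is legitimate; otherwise your argument stands as written.
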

  
   \begin{definition} Let $L_0, \ L_1$ be Hilbert $G$-modules, $A:L_0 \to L_1$ a closed densely-defined linear operator commuting with the action of $G$.   Such an operator is called $G${\it-Fredholm} if the following conditions are satisfied:
\begin{itemize}
\item ${\rm dim}_G \ker  A< \infty $
\item there exists a $G$-invariant closed subspace $Q\subset L_1$ so that $Q \subset {\rm im}\ A$ and ${\rm codim}_G \ Q = {\rm dim}_G (L_1 \cap Q^{\perp}) < \infty .$
\end{itemize}  
\end{definition}
\begin{rem}{\rm Henceforth we will also use another notation: $L \ominus Q \stackrel {\rm def}= L\cap Q^{\perp}$.}  \end{rem}

\begin{definition}  Let $L$ be a Hilbert $G$-module and $Q \subset L$ a $G$-invariant subspace, not necessarily closed.  Then 
\begin{itemize}
\item If for every $\epsilon >0$ there is a $G$-invariant subspace $Q_{\epsilon} \subset Q$ such that $Q_{\epsilon}$ is closed in $L$ and ${\rm codim}_G Q_{\epsilon} < \epsilon $ in $L$, then $Q$ is called $G${\it -dense} in $L$.
\item $Q$ is called {\it almost closed} if $Q$ is $G$-dense in its closure $\overline Q$.
\end{itemize}
\end{definition}  

\begin{rem}{\rm It could happen that a $G$-invariant dense subspace $M\subset L$ in a Hilbert $G$-module $L$ not be $G$-dense.  For example, if $G$ is countable, then the space $Q$ of all functions on $G$ with finite support is not $G$-dense in $L^2(G)$.  Indeed, a closed subspace in $Q$ is necessarily finite-dimensional in the usual sense while any nontrivial closed invariant subspace in $L^2(G)$ must be infinite-dimensional.}\end{rem}

%
%
%
  



\begin{lemma}\label{ac}(Lemma 1.15 of \cite{S}) If $A:L_0 \to L_1$ is a $G$-Fredholm operator, then its image is almost closed. That is ${\rm im}(A)$ is $G$-dense in $\overline{{\rm im}(A)}$.\end{lemma}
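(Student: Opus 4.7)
The plan is to use polar decomposition to reduce to a positive self-adjoint operator and then apply spectral theory. Write $A = U|A|$ with $|A| = (A^{*}A)^{1/2}$ self-adjoint positive on $L_{0}$, and $U\colon L_{0}':=L_{0}\ominus \ker A \to \overline{{\rm im}(A)}$ an isometric $G$-equivariant isomorphism. Then $|A|$ restricts to an injective self-adjoint $G$-equivariant operator on $L_{0}'$, and ${\rm im}(A) = U({\rm im}(|A|_{L_{0}'}))$. For every $\delta>0$, the spectral projection $P_{\delta}=\chi_{[\delta,\infty)}(|A|_{L_{0}'})$ commutes with $|A|$, and $|A|$ is bounded and invertible on $P_{\delta}L_{0}'$; hence $|A|(P_{\delta}L_{0}') = P_{\delta}L_{0}'$, and $W_{\delta}:=U(P_{\delta}L_{0}')\subset {\rm im}(A)$ is a closed, $G$-invariant subspace of $\overline{{\rm im}(A)}$ with ${\rm codim}_{G}W_{\delta} = {\rm Tr}_{G}\chi_{[0,\delta)}(|A|_{L_{0}'})$ in $\overline{{\rm im}(A)}$.

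It therefore suffices to show that ${\rm Tr}_{G}\chi_{[0,\delta)}(|A|_{L_{0}'})\to 0$ as $\delta \downarrow 0$. Since $|A|_{L_{0}'}$ is injective, $\chi_{\{0\}}(|A|_{L_{0}'})=0$, so by normality of ${\rm Tr}_{G}$ it is enough to exhibit one $\delta_{0}>0$ with ${\rm Tr}_{G}\chi_{[0,\delta_{0})}(|A|_{L_{0}'})<\infty$. This is where the $G$-Fredholm hypothesis enters. Let $Q\subset {\rm im}(A)$ be the closed invariant subspace of finite $G$-codimension from the hypothesis. The map $q\mapsto (A|_{L_{0}'})^{-1}(q)$ is defined on all of $Q$ and is closed (as the restriction of the closed map $(A|_{L_{0}'})^{-1}\colon {\rm im}(A)\to L_{0}'$ to the closed subspace $Q$), hence bounded by the closed graph theorem; denote the resulting bounded $G$-equivariant operator by $B\colon Q\to L_{0}'$, so that $AB=I_{Q}$. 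Set $\tilde Q := U^{-1}(Q)\subset L_{0}'$, a closed $G$-invariant subspace of finite $G$-codimension in $L_{0}'$.

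The main obstacle is the spectral estimate $\chi_{[0,\delta)}(|A|_{L_{0}'})L_{0}'\cap \tilde Q = \{0\}$ for all $\delta$ sufficiently small. Given $h\in\tilde Q$, set $q:=Uh\in Q$ and $g:=Bq$; since $U|A|g = ABq = q = Uh$ and $U$ is injective, $|A|g = h$, and $\|g\|\le \|B\|\,\|h\|$. If moreover $h\in \chi_{[0,\delta)}(|A|)L_{0}'$, then $\||A|^{2}g\| = \||A|h\|\le \delta\|h\| = \delta\||A|g\|$. In the spectral measure $d\mu_{g} = d\|E_{\lambda}g\|^{2}$ of $|A|$ at $g$ this reads $\int \lambda^{4}\,d\mu_{g}\le \delta^{2}\int \lambda^{2}\,d\mu_{g}$. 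Using $\lambda^{2}\le \lambda^{4}/(4\delta^{2})$ on $(2\delta,\infty)$ gives $\int_{(2\delta,\infty)}\lambda^{2}\,d\mu_{g}\le \|h\|^{2}/4$, so $\int_{[0,2\delta]}\lambda^{2}\,d\mu_{g}\ge 3\|h\|^{2}/4$ and consequently $\mu_{g}([0,2\delta]) \ge 3\|h\|^{2}/(16\delta^{2})$; but the total mass is $\mu_{g}(\mathbb{R}) = \|g\|^{2}\le \|B\|^{2}\|h\|^{2}$, and for $\delta < \sqrt{3}/(4\|B\|)$ these are incompatible unless $h=0$.

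With the intersection trivial, the contrapositive of Lemma 3.1 gives ${\rm Tr}_{G}\chi_{[0,\delta)}(|A|_{L_{0}'}) \le {\rm codim}_{G}\tilde Q < \infty$, and then normality of ${\rm Tr}_{G}$ forces ${\rm Tr}_{G}\chi_{[0,\delta)}(|A|_{L_{0}'}) \downarrow 0$ as $\delta\downarrow 0$. The subspaces $W_{\delta}\subset {\rm im}(A)$ thus supply the required closed, $G$-invariant approximations to $\overline{{\rm im}(A)}$, establishing $G$-density of ${\rm im}(A)$. The only delicate step is the spectral estimate above; the rest is routine use of polar decomposition, the spectral theorem, normality of ${\rm Tr}_{G}$ from Section 2, and Lemma 3.1.
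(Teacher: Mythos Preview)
The paper does not supply its own proof of this lemma; it simply cites Lemma~1.15 of \cite{S}. Your argument is correct and follows the standard architecture one finds in that reference: reduce via polar decomposition to the injective positive operator $|A|_{L_0'}$, produce the closed invariant subspaces $W_\delta=U\bigl(\chi_{[\delta,\infty)}(|A|)L_0'\bigr)\subset{\rm im}(A)$ from the spectral theorem, use the $G$-Fredholm hypothesis together with the closed graph theorem to get the bounded right inverse $B$ on $Q$, invoke Lemma~3.1 to bound $\dim_G\bigl(\chi_{[0,\delta)}(|A|)L_0'\bigr)$, and finish with normality of ${\rm Tr}_G$.

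The one place where you work harder than necessary is the ``delicate'' spectral estimate. Once you have $g\in L_0'$ with $|A|g=h$ and $\|g\|\le\|B\|\,\|h\|$, and $h\in\chi_{[0,\delta)}(|A|)L_0'$, a single application of Cauchy--Schwarz gives
\[
\|h\|^2=\langle |A|g,\,h\rangle=\langle g,\,|A|h\rangle\le\|g\|\,\||A|h\|\le \|B\|\,\|h\|\cdot\delta\,\|h\|,
\]
forcing $h=0$ whenever $\delta<1/\|B\|$. This replaces your moment computation on $d\mu_g$ entirely and yields a sharper threshold. Everything else in your write-up is clean; in particular your choice $\tilde Q=U^{-1}(Q)$ is the right one, since it is manifestly closed (unlike $B(Q)$), which is what Lemma~3.1 requires.
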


\begin{corollary}(\cite{GHS} lemma 2.6)\label{fatcut}  Let $A:L_{0}\to L_{1}$ be a $G$-Fredholm operator and $L$ be a $G$-submodule of $L_{1}$ such that $L\subset \overline{{\rm im}(A)}$.  Then $L\cap {\rm im}(A)$ is $G$-dense in $L$.
\end{corollary}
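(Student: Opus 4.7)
The plan is to transfer the $G$-density of ${\rm im}(A)$ inside its closure, supplied by Lemma \ref{ac}, to the intersection with $L$, using the intersection inequality of the preceding lemma (\cite{GHS}~(2.1)). The key move is to apply that intersection inequality in the correct ambient Hilbert $G$-module, namely $\overline{{\rm im}(A)}$.

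Fix $\epsilon > 0$. Since $A$ is $G$-Fredholm, Lemma \ref{ac} supplies a closed, $G$-invariant subspace $Q_\epsilon \subset {\rm im}(A)$ with ${\rm codim}_G Q_\epsilon < \epsilon$, the codimension being measured in $\overline{{\rm im}(A)}$. I define
\[
R_\epsilon := L \cap Q_\epsilon.
\]
Then $R_\epsilon$ is a closed, $G$-invariant subspace of $L$ contained in $L \cap {\rm im}(A)$. The proof reduces to showing that the codimension of $R_\epsilon$ in $L$ is at most ${\rm codim}_G Q_\epsilon$, and hence is less than $\epsilon$.

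For the codimension bound I would argue by contradiction. Suppose ${\rm dim}_G(L \ominus R_\epsilon) > {\rm codim}_G Q_\epsilon$. Since $L \subset \overline{{\rm im}(A)}$, both $L \ominus R_\epsilon$ and $Q_\epsilon$ are closed, $G$-invariant subspaces of the Hilbert $G$-module $\overline{{\rm im}(A)}$. Applying the intersection inequality inside this ambient module yields
\[
(L \ominus R_\epsilon) \cap Q_\epsilon \neq \{0\}.
\]
But any $x$ in this intersection lies in $L \cap Q_\epsilon = R_\epsilon$ while being orthogonal to $R_\epsilon$ inside $L$, forcing $x = 0$ — a contradiction. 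Hence ${\rm codim}_G R_\epsilon < \epsilon$ in $L$, and since $\epsilon$ was arbitrary, $L \cap {\rm im}(A)$ is $G$-dense in $L$.

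The only conceptual point, and the place where one could easily slip, is the choice of ambient module when invoking the intersection lemma: it must be $\overline{{\rm im}(A)}$ rather than $L_1$, because that is the space in which Lemma \ref{ac} delivers the codimension estimate for $Q_\epsilon$. Everything else is routine bookkeeping with orthogonal complements and intersections of closed $G$-invariant subspaces.
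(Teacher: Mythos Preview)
Your proof is correct and is essentially the standard argument. The paper itself gives no proof here (the statement is lifted from \cite{GHS}), but your approach---pulling the closed $Q_\epsilon\subset{\rm im}(A)$ from Lemma~\ref{ac}, intersecting with $L$, and then using the intersection inequality of Lemma~3.1 inside $\overline{{\rm im}(A)}$ to bound ${\rm dim}_G(L\ominus R_\epsilon)$---is exactly the proof of Lemma~\ref{gdens} specialized to this situation, which is how the corollary follows from Lemma~\ref{ac} in the paper's framework. Your emphasis on working in the ambient module $\overline{{\rm im}(A)}$ rather than $L_1$ is the right observation and the only place one could go wrong.
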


\begin{lemma}\label{gdens}(\cite{S}, lemma 1.17)  Let $L$ be a Hilbert $G$-module, $L_{1}\subset L$, and $Q\subset L$ be $G$-invariant subspaces in $L$ so that $L_{1}$ is closed and $Q$ is $G$-dense in $L$.  Then $Q\cap L_{1}$ is $G$-dense in $L_{1}$.  More generally, if $Q$ is almost closed, then $Q\cap L_{1}$ is almost closed with closure equal $\overline Q\cap L_{1}$.\end{lemma}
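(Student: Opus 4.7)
The plan is to produce, for each $\varepsilon>0$, a closed $G$-invariant subspace of $Q\cap L_{1}$ whose codimension in $L_{1}$ is less than $\varepsilon$; this is exactly the definition of $G$-density. Since $Q$ is $G$-dense in $L$, fix a closed $G$-invariant $Q_{\varepsilon}\subset Q$ with ${\rm codim}_{G}Q_{\varepsilon}<\varepsilon$ in $L$, and take the obvious candidate $R_{\varepsilon}:=L_{1}\cap Q_{\varepsilon}$, which is manifestly closed, $G$-invariant, and contained in $Q\cap L_{1}$.

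The remaining point is the codimension estimate. Let $\pi:L\to L\ominus Q_{\varepsilon}$ denote the orthogonal projection, which is $G$-equivariant since $Q_{\varepsilon}$ is invariant. Restrict $\pi$ to $L_{1}\ominus R_{\varepsilon}$: if $v\in L_{1}\ominus R_{\varepsilon}$ has $\pi v=0$, then $v\in Q_{\varepsilon}$ and $v\in L_{1}$, so $v\in R_{\varepsilon}$, and combined with $v\perp R_{\varepsilon}$ this forces $v=0$. Thus $\pi|_{L_{1}\ominus R_{\varepsilon}}$ is a bounded $G$-equivariant injection into $L\ominus Q_{\varepsilon}$, so monotonicity of ${\rm dim}_{G}$ under such injections gives ${\rm codim}_{G}R_{\varepsilon}={\rm dim}_{G}(L_{1}\ominus R_{\varepsilon})\le {\rm dim}_{G}(L\ominus Q_{\varepsilon})<\varepsilon$. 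Alternatively, when ${\rm dim}_{G}L_{1}<\infty$ this estimate drops straight out of Lemma~3.1 applied to the pair $L_{1},Q_{\varepsilon}\subset L$.

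For the ``more generally'' clause, assume $Q$ is almost closed, i.e. $G$-dense in $\overline{Q}$. I would apply the case just proved inside the Hilbert $G$-module $\overline{Q}$, using the closed submodule $L_{1}\cap\overline{Q}\subset\overline{Q}$: it gives that $Q\cap(L_{1}\cap\overline{Q})=Q\cap L_{1}$ is $G$-dense in $L_{1}\cap\overline{Q}$. The identification $\overline{Q\cap L_{1}}=\overline{Q}\cap L_{1}$ then follows from the fact that $G$-dense subspaces are norm-dense in their ambient module: any nonzero $v\in L_{1}\cap\overline{Q}$ orthogonal to $Q\cap L_{1}$ would generate a closed $G$-invariant submodule of strictly positive $G$-dimension $d$, which is forced (by invariance of $R_{\varepsilon}$ and unitarity of the action) to lie in $(L_{1}\cap\overline{Q})\ominus R_{\varepsilon}$ for every $\varepsilon$, contradicting the codimension bound for any $\varepsilon<d$.

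The main subtlety is the monotonicity of ${\rm dim}_{G}$ under equivariant bounded injections used in the second paragraph; this is a standard consequence of the faithful, normal, semifinite trace ${\rm Tr}_{G}$ constructed in Section~2, applied to the projections onto the relevant ranges, so no new analysis is required beyond what is already in the paper.
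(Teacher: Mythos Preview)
The paper does not give its own proof of this lemma; it is simply quoted from \cite{S}, Lemma~1.17. Your argument is correct and is essentially the standard one: take $R_{\varepsilon}=L_{1}\cap Q_{\varepsilon}$ and bound its codimension in $L_{1}$ by that of $Q_{\varepsilon}$ in $L$ via the injective equivariant projection $L_{1}\ominus R_{\varepsilon}\to L\ominus Q_{\varepsilon}$ and polar decomposition. Your handling of the ``more generally'' clause---reducing to the first part inside $\overline{Q}$ and then arguing that $G$-density forces norm density by faithfulness of ${\rm Tr}_{G}$---is also the standard route and is fine. The only remark is that your alternative via Lemma~3.1 is, as you note, restricted to $\dim_{G}L_{1}<\infty$, since the inequality there does not directly yield a codimension bound when $\dim_{G}L_{1}=\infty$; your primary argument via the injective projection is the one that covers all cases.
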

%
%
\begin{lemma}\label{fredrest} If $A:\mathcal H_1\to \mathcal H_2$ is $G$-Fredholm and $L\hookrightarrow\mathcal H_1$ is closed and $G$-invariant, then $A|_L:L\to \overline{A(L)}$ is $G$-Fredholm.\end{lemma}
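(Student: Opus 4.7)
I verify the two conditions defining the $G$-Fredholm property. For the kernel, $\ker(A|_L) = L \cap \ker A$ is a closed $G$-invariant subspace of $\ker A$, so ${\rm dim}_G \ker(A|_L) \le {\rm dim}_G \ker A < \infty$. For the image condition, first replace $L$ by $L \ominus (L \cap \ker A)$: this does not change $\overline{A(L)}$ and makes $A|_L$ injective, so that its polar decomposition $A|_L = U_L T_L$ has $T_L := ((A|_L)^*(A|_L))^{1/2}$ a positive self-adjoint $G$-equivariant operator on $L$ and $U_L : L \to \overline{A(L)}$ a $G$-equivariant unitary.

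The image condition now reduces to finding $\epsilon > 0$ for which the spectral projection ${\bf 1}_{[0,\epsilon)}(T_L)$ has finite $G$-dimensional range. Granting this, $V_\epsilon := {\bf 1}_{[\epsilon,\infty)}(T_L)(L)$ is closed $G$-invariant in $L$ with ${\rm codim}_G V_\epsilon < \infty$; $T_L$ is bounded below by $\epsilon$ on $V_\epsilon$, and $A|_L(V_\epsilon) = U_L(V_\epsilon)$ is the required closed $G$-invariant submodule of $A(L)$, with ${\rm codim}_G U_L(V_\epsilon) < \infty$ in $\overline{A(L)}$.

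To produce such an $\epsilon$, I plan to use the variational (min-max) characterization of spectral projections, under which ${\rm dim}_G {\bf 1}_{[0,\epsilon)}(T_L)$ equals the supremum of ${\rm dim}_G W$ over closed $G$-invariant $W \subset L$ on which $\|Ax\| < \epsilon \|x\|$. Now the $G$-Fredholm property of $A$ supplies a closed $G$-invariant $Q \subset {\rm im}(A)$ with ${\rm codim}_G Q < \infty$; the $G$-equivariant bijection $A : A^{-1}(Q) \cap (\ker A)^\perp \to Q$ between Hilbert $G$-modules has bounded inverse $B$ by the open mapping theorem, so $\|Ax\| \ge \|B\|^{-1} \|x\|$ on $A^{-1}(Q) \cap (\ker A)^\perp$. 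The orthogonal complement of this subspace in $\mathcal H_1$ is $\ker A \oplus X$ with $X := (\ker A)^\perp \ominus (A^{-1}(Q) \cap (\ker A)^\perp)$, and composing $A$ with the orthogonal projection $\mathcal H_2 \to \mathcal H_2 \ominus Q$ embeds $X$ injectively into $\mathcal H_2 \ominus Q$, so ${\rm dim}_G X < \infty$. For $\epsilon < \|B\|^{-1}$, any $W$ as above meets $A^{-1}(Q) \cap (\ker A)^\perp$ only at $0$ and hence embeds by orthogonal projection into $\ker A \oplus X$, yielding ${\rm dim}_G W < \infty$.

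The principal technical burden is the variational characterization of spectral projections for self-adjoint $G$-equivariant operators, which should be a routine consequence of the spectral theorem in the semifinite von Neumann algebra framework surveyed in Section 2, but is the one ingredient not made fully explicit there; formalizing it in the present $G$-setting is the step I expect to require the most care.
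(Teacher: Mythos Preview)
Your approach is correct in outline but takes a genuinely different route from the paper's.  The paper disposes of the lemma in one line by invoking the already--established $G$-density machinery: since $A$ is $G$-Fredholm, Lemma~\ref{ac} makes ${\rm im}(A)$ almost closed, and then Corollary~\ref{fatcut} and Lemma~\ref{gdens} (applied with the closed submodule $\overline{A(L)}\subset\overline{{\rm im}(A)}$) produce closed $G$-invariant subspaces of arbitrarily small $G$-codimension inside $A(L)$.  All the spectral theory is hidden in the proofs of those earlier lemmas (ultimately in \cite{S}, Lemma~1.15).  Your argument instead rebuilds this from scratch via the polar decomposition of $A|_L$, which is more self-contained and makes transparent \emph{why} the result holds, at the cost of reproving part of what the paper has already packaged.

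One technical point to watch in your write-up: for unbounded $A$ the set $A^{-1}(Q)\cap(\ker A)^\perp$ is closed in the graph norm but need not be closed in $\mathcal H_1$, so your definition of $X$ as its orthogonal complement in $(\ker A)^\perp$, and the subsequent application of $A$ to elements of $X$, is not well posed as stated.  The clean fix---and it also removes the need for the full variational principle you flag---is to work with the spectral subspaces of $T=|A|$ directly: on $W_\epsilon:={\bf 1}_{[0,\epsilon)}(T_L)(L)$ one has $\|Tx\|=\|T_Lx\|\le\epsilon\|x\|$, so $W_\epsilon\cap{\bf 1}_{[\delta,\infty)}(T)(\mathcal H_1)=\{0\}$ for $\epsilon<\delta$, and Lemma~3.1 gives $\dim_G W_\epsilon\le\dim_G{\bf 1}_{[0,\delta)}(T)(\mathcal H_1)$.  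The finiteness of the right-hand side for small $\delta$ is exactly the spectral characterization of $G$-Fredholmness established in the proof of Lemma~\ref{ac} (i.e.\ \cite{S}, Lemma~1.15), so you may simply cite that rather than set up a min--max argument.
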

\begin{proof} This follows immediately from Lemma \ref{fatcut} and Lemma \ref{gdens}.
\end{proof}

\section{The $\overline \partial$-Neumann Problem}
  The principal references for this section are \cite{E,FK,GHS}. Let $M$ be a complex manifold with nonempty, smooth, strongly pseudoconvex boundary $bM$,  $\bar M=M\cup bM$, so that $M$ is the interior of $\bar M$, and ${\rm dim}_{\mathbb C}(M)=n$. For simplicity, let us also assume that $\bar M\subset\tilde M$, where $\tilde M$ is a complex neighborhood of $\bar M$ of the same dimension, such that $bM$ is in the interior of $\tilde M$. Let us choose a smooth function $\rho :\tilde M\to \mathbb R$ so that
 
  \[ M=\{z\mid \rho(z)<0\}, \ \ bM = \{z\mid \rho(z)=0\},\]
\noindent 
and for all $z\in bM$, we have $d\rho(z)\neq 0$.   
  
We describe the construction of $\square$ and its relevance to the solution of the $\bar\partial$-Neumann problem.  We seek a solution $u\in L^{2}(M)$ to the equation $\overline\partial u=\phi$ with $\phi \in L^{2}(M,\Lambda^{0,1})$, $\overline \partial \phi =0$.  Note that solutions will only be determined modulo the kernel of $\overline \partial$ consisting of all square-integrable holomorphic functions on $M$.  It is preferable to deal with self-adjoint operators, so since the Hilbert space adjoint $\overline \partial ^{*}$ of $\overline \partial$ satisfies $\overline{{\rm im} \ \overline \partial ^{*}}=(\ker \overline \partial)^{\perp}$, it is natural to seek $u$ of the form $u=\overline \partial^{*}v$, so that
     
      \begin{equation}\label{boh} \overline \partial\overline \partial^{*}v =\phi.  \end{equation}

Note that $\overline\partial\overline\partial^*$ is a self-adjoint operator.  In order to do away with the compatibility condition on $\phi$, let us add a term $\overline \partial^{*}\overline \partial v$, thus obtaining 

    \begin{equation}\label{KL}(\overline \partial\overline \partial^{*}+\overline \partial^{*}\overline \partial)v=\phi, \end{equation}

\noindent
where $\phi$ need not satisfy $\overline\partial \phi=0$. Notice that when $\overline \partial \phi =0$, \eqref{KL} reduces to (\ref{boh}) because applying $\overline \partial$ to \eqref{KL} gives $\overline \partial\overline \partial^{*}\overline \partial v =0$, which in turn implies 
\[0=\langle\overline \partial\overline \partial^{*}\overline \partial v, \overline \partial v \rangle= \|\overline \partial^{*}\overline \partial v\|_{L^{2}(M)}^{2}.\]
 Thus the new term in \eqref{KL} vanishes when the compatibility condition holds. Let us consider $\overline \partial $ as the maximal operator in $L^2(M)$ and let $\overline \partial^*$ be the Hilbert space adjoint operator. We will also use the corresponding Laplacian
\[\square=\square_{p,q}=\overline \partial\overline \partial^*+\overline \partial^* \overline \partial \quad \hbox{on} \quad
L^2(M,\Lambda^{p,q}).\]

We will denote the domain of any operator $A$ by ${\rm Dom}(A)$. The following lemma  gives a description of the operators $\overline \partial^*$, $\square$ as well as their domains ${\rm Dom}(\overline \partial^*)$, ${\rm Dom}(\square)$.
Let $\vartheta$ be the formal adjoint operator to $\overline \partial$, and let 
$\sigma=\sigma(\vartheta,\cdot)$ be its principal symbol.

\medskip
\begin{lemma}\label{dbar} \cite {GHS}  Let us assume that $M$ is strongly pseudoconvex.

\item{(i)}  The operator $\overline \partial^*$ can be obtained as the closure
of $\vartheta$ from the initial domain
\[  {\rm Dom}_0(\overline \partial^*)=\{\omega \mid \omega\in C_c^\infty(M, \Lambda^\bullet ),\ \sigma(\vartheta,d\rho)\omega=0 \ \ {\rm on}\ \ bM\}. \]
\item{(ii)}  The space ${\rm Dom}_0(\overline \partial^*)$ is dense in
${\rm Dom}(\overline \partial^*)\cap {\rm Dom}(\overline \partial)$ in the norm
\[(\| \omega\|_0^2 + \| \overline \partial^* \omega\|_0^2 + \|\overline \partial
\omega\|_0^2)^{1/2}, \ \omega\in {\rm Dom}(\overline \partial^*)\cap {\rm Dom}(\overline \partial).\]
\item{(iii)}  The operator $\square=\square_{p,q}$ can be obtained as the
closure of the operator $\overline \partial\vartheta + \vartheta\overline \partial$
from the initial domain
\[{\rm Dom}_0(\square) = \{\omega \mid  \omega,\ \overline \partial \omega, \vartheta\omega\in C^{\infty}(M, \Lambda^{\bullet})\cap L^2(M), \quad \sigma(\vartheta,d\rho)\omega=0,\]
\[ \sigma(\vartheta,d\rho)\overline \partial\omega=0 \ \
{\rm on}\  bM\}.\]
 For any $\omega\in {\rm Dom} (\square)\buildrel
\rm def \over = \{ \omega\in {\rm Dom}(\overline \partial)\cap {\rm Dom}(\overline \partial^*): \overline \partial
\omega\in {\rm Dom}(\overline \partial^*), \ \overline \partial^* \omega\in {\rm Dom}(\overline \partial)\}$ the following
integral identity holds
\[(\square\omega,\omega)=\| \overline \partial\omega\|_0^2 + \| \overline \partial^* \omega\|_0^2. \]
\end{lemma}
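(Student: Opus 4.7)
The plan is to establish the three parts in sequence, with (i) and (ii) forming the analytic core and (iii) following formally from them. For part (i), I would start from Green's formula applied to smooth $(p,q-1)$- and $(p,q)$-forms $\phi$ and $\omega$ on $\bar M$:
\[
(\bar\partial\phi,\omega)_0 - (\phi,\vartheta\omega)_0 \;=\; \int_{bM}\langle\phi,\sigma(\vartheta,d\rho)\omega\rangle\,dS.
\]
This identity has two consequences. First, if $\omega\in\mathrm{Dom}_0(\bar\partial^*)$ then the boundary integral vanishes against every $\phi\in C_c^\infty$, so $\omega\in\mathrm{Dom}(\bar\partial^*)$ and $\bar\partial^*\omega=\vartheta\omega$; hence the closure of $\vartheta|_{\mathrm{Dom}_0(\bar\partial^*)}$ is contained in $\bar\partial^*$. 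Second, if $\omega\in C^\infty(\bar M,\Lambda^\bullet)\cap\mathrm{Dom}(\bar\partial^*)$ then plugging in arbitrary $\phi\in C^\infty(\bar M,\Lambda^{p,q-1})$ forces $\sigma(\vartheta,d\rho)\omega=0$ on $bM$. Thus the reverse inclusion reduces to approximating a general element of $\mathrm{Dom}(\bar\partial^*)\cap\mathrm{Dom}(\bar\partial)$ by smooth forms of the above type, which is exactly statement (ii).

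To prove (ii) I would proceed in two steps, cutoff followed by regularization. Using the bounded geometry of $M$ (inherited from the compactness of $X=M/G$), construct a sequence of $G$-uniform cutoffs $\chi_k\in C_c^\infty(M)$ with $\chi_k\to 1$ locally and $|d\chi_k|$ uniformly bounded and tending to zero pointwise; because $\bar\partial$ and $\vartheta$ are first-order, commutator estimates yield $\chi_k\omega\to\omega$ in the graph norm $(\|\cdot\|_0^2+\|\bar\partial\cdot\|_0^2+\|\bar\partial^*\cdot\|_0^2)^{1/2}$. Next, regularize $\chi_k\omega$ by mollifiers. Away from $bM$ ordinary Friedrichs mollifiers commute with $\bar\partial$ and $\vartheta$ up to operators bounded in $L^2$, so there is no issue. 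Near $bM$ the main obstacle appears: naive mollification destroys the boundary condition $\sigma(\vartheta,d\rho)\omega=0$. Following the technique of Friedrichs--Kohn, I would work in a finite collection of $G$-translates of boundary coordinate charts, mollify only in the $2n-1$ tangential directions, and use the Friedrichs commutator lemma (tangential version) to control $[\bar\partial,\mathrm{moll}]$ and $[\vartheta,\mathrm{moll}]$ in $L^2$. Because tangential mollification preserves the vanishing of the normal part of $\omega$, the boundary condition survives the regularization. Patching via a $G$-invariant partition of unity yields the required approximating sequence in $\mathrm{Dom}_0(\bar\partial^*)$.

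This tangential-mollification step is the crux: one must check convergence of $\omega$, $\bar\partial\omega$, and $\vartheta\omega$ simultaneously in $L^2$, which depends on the delicate fact that the Friedrichs commutator error is bounded on $L^2$ and tends to zero strongly. Once (ii) is established, the reverse inclusion in (i) is immediate: given $\omega\in\mathrm{Dom}(\bar\partial^*)$, approximate by $\omega_j\in\mathrm{Dom}_0(\bar\partial^*)$; then $\omega_j\to\omega$ and $\vartheta\omega_j=\bar\partial^*\omega_j\to\bar\partial^*\omega$, exhibiting $\omega$ in the domain of the closure of $\vartheta$ on $\mathrm{Dom}_0(\bar\partial^*)$.

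For part (iii) the characterization of $\mathrm{Dom}(\square)$ displayed in the statement is essentially the definition of the Friedrichs-type sum of two adjoint pairs. Using (i), the two boundary conditions $\sigma(\vartheta,d\rho)\omega=0$ and $\sigma(\vartheta,d\rho)\bar\partial\omega=0$ translate the membership conditions $\omega\in\mathrm{Dom}(\bar\partial^*)$ and $\bar\partial\omega\in\mathrm{Dom}(\bar\partial^*)$ for smooth forms. To identify $\square$ with the closure of $\bar\partial\vartheta+\vartheta\bar\partial$ on $\mathrm{Dom}_0(\square)$, apply (ii) twice: first approximate $\omega\in\mathrm{Dom}(\square)$ by smooth forms in $\mathrm{Dom}_0(\bar\partial^*)$ whose $\bar\partial$-images still lie in $\mathrm{Dom}(\bar\partial^*)$, then a second regularization arranges the second boundary condition; strong pseudoconvexity enters through Kohn's basic estimate to ensure the higher-order regularity needed for this iterated approximation. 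Finally, for $\omega\in\mathrm{Dom}(\square)$ the integral identity
\[
(\square\omega,\omega) \;=\; (\bar\partial^*\omega,\bar\partial^*\omega)+(\bar\partial\omega,\bar\partial\omega) \;=\; \|\bar\partial\omega\|_0^2+\|\bar\partial^*\omega\|_0^2
\]
is a direct application of the adjoint property $(\bar\partial\alpha,\beta)=(\alpha,\bar\partial^*\beta)$ valid whenever $\alpha\in\mathrm{Dom}(\bar\partial)$ and $\beta\in\mathrm{Dom}(\bar\partial^*)$.
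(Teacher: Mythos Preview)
The paper does not supply a proof of this lemma; it is quoted from \cite{GHS} with no argument given, so there is no in-paper proof to compare against. Your outline is the standard Folland--Kohn/H\"ormander density scheme (Green's formula to read off the boundary condition, $G$-uniform cutoffs to reduce to compact support, tangential Friedrichs mollifiers near $bM$ to preserve $\sigma(\vartheta,d\rho)\omega=0$), and that is indeed the method behind the cited result.

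There is one genuine logical slip. You assert that the reverse inclusion in (i) ``reduces to'' and is ``immediate'' from (ii). But (ii) only gives density of $\mathrm{Dom}_0(\bar\partial^*)$ in $\mathrm{Dom}(\bar\partial^*)\cap\mathrm{Dom}(\bar\partial)$, whereas (i) asks for the closure of $\vartheta$ on $\mathrm{Dom}_0(\bar\partial^*)$ to equal $\bar\partial^*$ on all of $\mathrm{Dom}(\bar\partial^*)$; an arbitrary $\omega\in\mathrm{Dom}(\bar\partial^*)$ need not lie in $\mathrm{Dom}(\bar\partial)$, so (ii) as stated does not apply to it. The repair is easy---run the same cutoff and tangential-mollification argument while tracking only the $\bar\partial^*$-graph norm and dropping the $\bar\partial$-convergence requirement---but (i) is not a formal corollary of (ii). Your sketch for (iii) is also thin: ``apply (ii) twice'' does not by itself produce an approximation in the $\square$-graph norm, since controlling $\bar\partial\vartheta\omega_j$ and $\vartheta\bar\partial\omega_j$ requires the subelliptic regularity theory, as you correctly hint with the reference to the basic estimate; in the literature this step is handled via the a priori estimates rather than a bare iteration of the density lemma.
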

\noindent
The boundary conditions on $\omega$ are called the $\overline \partial${\it-Neumann conditions}.

  We describe the Friedrichs construction here for completeness \cite{FK}.  Suppose $\mathcal H$ is a Hilbert space and $Q$ is a Hermitian form defined on a dense subspace $\mathcal D\subset \mathcal H$ so that $Q(\phi,\phi) \ge \|\phi\|^2, \ \phi \in \mathcal D$.  Suppose further that $\mathcal D$ is a Hilbert space under the inner product $Q$.  Then there is a self-adjoint operator $F$ on $\mathcal H$ associated with $Q$:  For each $\alpha \in \mathcal H, \ \psi \mapsto \langle\alpha,\psi\rangle$ is a $Q$-bounded functional of $\psi \in \mathcal D$ since $|\langle\alpha,\psi\rangle|\le \|\alpha\|\|\psi\|\le\|\alpha\|\sqrt{Q(\psi,\psi)}$ .  By Riesz, we have a unique representative $\phi \in \mathcal D$ so that for all $\psi \in \mathcal D$, $Q(\phi,\psi)=\langle\alpha,\psi\rangle$.  Now define $T:\mathcal H\to \mathcal D\subset \mathcal H$ by $T\alpha = \phi$.  Then $\|T\alpha\|^2 \le Q(T\alpha,T\alpha) = \langle\alpha,T\alpha\rangle \le \|\alpha\|\|T\alpha\|$ so $T$ is a bounded operator.  Further, $T\alpha=0$ implies that $  \forall \psi \in \mathcal D,\ Q(T\alpha,\psi) = \langle\alpha,\psi\rangle=0$, hence $\alpha =0$ since $\mathcal D$ is assumed dense.  So $T$ is injective.  Now, $\langle T\alpha,\beta\rangle=\overline{\langle\beta,T\alpha\rangle}=\overline{Q(T\beta,T\alpha)}=Q(T\alpha,T\beta)=\langle\alpha,T\beta\rangle$.  Therefore $T$ is self-adjoint.  Put $F=T^{-1}$. The Friedrichs Extension theorem says that $F$ is the unique self-adjoint operator with ${\rm Dom}(F)\subset\mathcal D$ satisfying $Q(\phi,\psi) = \langle F\phi,\psi\rangle$ for all $\phi \in {\rm Dom}(F)$ and $\psi \in \mathcal D$.

\noindent
  In our case we will put $Q(\phi,\psi)=\langle\overline \partial \phi,\overline \partial \psi\rangle+\langle\vartheta \phi,\vartheta \psi\rangle+\langle\phi,\psi\rangle$ on the smooth forms satisfying the $\overline \partial$-Neumann boundary conditions.  Thus $F=\square + 1$.  
  
The following is a regularity result for $F$ and is the crux of the problem.
\begin{theorem}\label{globalize}
Let $M$ be strongly pseudoconvex, $U$ an open subset of $\bar M$ with compact clos ure, and $\zeta, \zeta_{1}\in C^{\infty}_{c}(U)$ for which $\zeta_{1}|_{{\rm supp}(\zeta)}=1$.  If $q>0$ and $\alpha|_{U}\in H^{s}(U,\Lambda^{p,q})$, then $\zeta(\square +1)^{-1}\alpha\in H^{s+1}(\bar M,\Lambda^{p,q})$ and there exist constants $C_s>0$ so that 
 \begin{equation}\label{prima}\|\zeta (\square +1)^{-1}\alpha\|_{s+1}^2\le C_s(\|\zeta_{1}\alpha\|_s^2+\|\alpha\|_0^2).\end{equation}
 \end{theorem}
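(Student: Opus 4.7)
The plan is to establish \eqref{prima} by subelliptic bootstrapping from the Morrey--Kohn basic estimate. For $q>0$ on strongly pseudoconvex $M$, this estimate reads
\[
\|\omega\|_{1/2}^2 \le C\bigl(\|\bar\partial\omega\|_0^2 + \|\vartheta\omega\|_0^2 + \|\omega\|_0^2\bigr) = C\,Q(\omega,\omega)
\]
for $\omega \in {\rm Dom}(\bar\partial)\cap {\rm Dom}(\bar\partial^*)$ supported in a fixed neighborhood of $bM$. Setting $u = (\square+1)^{-1}\alpha$, the Friedrichs identity gives $Q(u,u) = \langle \alpha, u\rangle \le \|\alpha\|_0\|u\|_0$, and since $\|u\|_0 \le \|\alpha\|_0$ comes from $\square+1 \ge 1$, this yields $\|u\|_{1/2} \le C\|\alpha\|_0$. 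This is the base case of the bootstrap.

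To raise the index to arbitrary $s$, I would work in special boundary coordinates $(t_1,\dots,t_{2n-1},r)$ near a point of $bM$, with $r$ the normal variable, and introduce a compactly supported tangential pseudodifferential operator $\Lambda_t^s$ of order $s$. The key observation is that $v = \Lambda_t^s(\zeta_1 u)$ still lies in ${\rm Dom}(\bar\partial)\cap {\rm Dom}(\bar\partial^*)$ modulo lower-order tangential errors, because purely tangential operators commute with the pointwise normal symbol $\sigma(\vartheta,d\rho)$ on $bM$. Applying the basic estimate to $v$ and expanding $Q(v,v)$ by moving $\Lambda_t^s$ past $\bar\partial$ and $\vartheta$ (producing commutators that have the same differential order as $\bar\partial,\vartheta$ but lower tangential order), together with $(\square+1)(\zeta_1 u) = \zeta_1\alpha + [\square,\zeta_1] u$, yields a tangential Sobolev estimate
\[
\|\zeta u\|_{s+1/2,\,t}^2 \le C_s\bigl(\|\zeta_1\alpha\|_s^2 + \|\zeta_1 u\|_s^2 + \|u\|_0^2\bigr).
\]
Iterating this in half-integer steps, starting from the $L^2$ base case and inserting a chain of cut-offs $\zeta \prec \zeta' \prec \cdots \prec \zeta_1$ to absorb the intermediate $\|\zeta_1 u\|_s$ terms, gives tangential $H^{s+1/2}$ control of $\zeta u$.

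To convert tangential regularity into genuine $H^{s+1}$ regularity, I would use the fact that $\square+1$ is noncharacteristic in the normal direction: in local coordinates the equation takes the form $-\partial_r^2 u + P_t u = \alpha - u$ with $P_t$ of order $\le 2$ in tangential derivatives, so one may solve algebraically for $\partial_r^2 u$ and trade each pair of normal derivatives for two tangential ones, pushing tangential $H^{s+1}$ bounds to full Sobolev $H^{s+1}$ bounds. Combined with interior elliptic regularity for $\square+1$ (whose principal symbol is $|\xi|^2$ away from $bM$) and a finite partition of unity covering $\bar M$, this yields \eqref{prima}. The main technical obstacle is the commutator bookkeeping in the tangential step: the noncommutation of $\Lambda_t^s$ with $\vartheta$ produces boundary error terms on $bM$, and verifying that $v \in {\rm Dom}(\bar\partial^*)$ up to absorbable errors requires exploiting both the strictly tangential nature of $\Lambda_t^s$ and the positivity of the Levi form furnished by strong pseudoconvexity --- it is precisely this positivity which underlies the $1/2$-gain in the basic estimate and thus drives the entire bootstrap.
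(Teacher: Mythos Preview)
Your outline is essentially the Folland--Kohn machinery (basic estimate $\Rightarrow$ tangential bootstrap with $\Lambda_{\bf t}^s$ $\Rightarrow$ recover normal derivatives from noncharacteristicity of $\square+1$), and this is exactly what the paper invokes: its proof of this theorem is simply a citation of Prop.~3.1.1 of \cite{FK} together with Engli\v s's extension \cite{E} to the noncompact setting. So the approach matches.

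Two small corrections. First, you write ``a finite partition of unity covering $\bar M$'', but $\bar M$ is noncompact here; what you actually need (and what suffices) is a finite cover of the compact set $\overline U\supset{\rm supp}(\zeta_1)$. The global tail is controlled only by the $\|\alpha\|_0$ term, and making this precise in the noncompact situation is exactly the content of the Engli\v s reference---your sketch does not address why the pseudolocal estimate survives when $M$ is not compact. Second, your bookkeeping ``tangential $H^{s+1/2}$ control of $\zeta u$'' undercounts by a half: to reach the full norm $\|\zeta u\|_{s+1}$ you need $|||D\zeta u|||_s$, i.e.\ tangential order $s+1$ together with one normal derivative at tangential order $s$, before the $D_\rho^2$-trade. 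This is what the paper's own appendix (its Lemma~\ref{biglem} with $k=2s+2$) records, and it is the correct target for the iteration.
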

 \begin{proof}This is Prop. 3.1.1 from \cite{FK} extended to the noncompact case in \cite{E}. \end{proof}
\begin{corollary}\label{bomb}Let $q>0$ and $\square=\int_{0}^{\infty}\lambda dE_{\lambda}$ be the spectral decomposition of the Laplacian in $L^{2}(M,\Lambda^{p,q})$.  If $\delta>0$ and $P=\int_{0}^{\delta}dE_{\lambda}$ then ${\rm im}(P)\subset C^{\infty}(\bar M,\Lambda^{p,q})$.\end{corollary}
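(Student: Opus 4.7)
The strategy is a standard elliptic-type bootstrap, only that here the ``regularity mechanism'' is the subelliptic estimate of Theorem~\ref{globalize} rather than genuine ellipticity, and the gain per iteration is one Sobolev order. The key ingredient coming from the spectral calculus is that elements of $\operatorname{im}(P)$ are $C^{\infty}$ vectors for $\square$.

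First I would record the following observation: if $\omega\in\operatorname{im}(P)$, then $\omega\in\operatorname{Dom}(\square^{k})$ for every $k\geq 0$, $\square^{k}\omega\in\operatorname{im}(P)$, and
\[
\|\square^{k}\omega\|_{0}\ \le\ \delta^{k}\|\omega\|_{0}.
\]
This is immediate from the spectral representation $\square^{k}P=\int_{0}^{\delta}\lambda^{k}\,dE_{\lambda}$, which is a bounded operator whose image lies in $\operatorname{im}(P)$. In particular $(\square+1)^{k}\omega$ is a well-defined $L^{2}$ element of $\operatorname{im}(P)$ for every $k$.

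Next I would prove by induction on $s\in\mathbb{N}$ the following local statement: for every $\omega\in\operatorname{im}(P)$ and every $\zeta\in C_{c}^{\infty}(\bar M)$, one has $\zeta\omega\in H^{s}(\bar M,\Lambda^{p,q})$. The case $s=0$ is just $\omega\in L^{2}$. For the inductive step, set $\eta=(\square+1)\omega$, which again belongs to $\operatorname{im}(P)$ by the preceding paragraph. Choose $\zeta_{1}\in C_{c}^{\infty}(\bar M)$ with $\zeta_{1}\equiv 1$ on $\operatorname{supp}(\zeta)$. The inductive hypothesis applied to $\eta$ gives $\zeta_{1}\eta\in H^{s-1}$, so $\eta|_{U}\in H^{s-1}(U,\Lambda^{p,q})$ where $U$ is a relatively compact open set containing $\operatorname{supp}(\zeta_{1})$. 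Theorem~\ref{globalize} then yields
\[
\zeta\omega\ =\ \zeta(\square+1)^{-1}\eta\ \in\ H^{s}(\bar M,\Lambda^{p,q}),
\]
completing the induction.

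Since $\zeta\omega\in H^{s}(\bar M,\Lambda^{p,q})$ for all $s$ and all compactly supported cutoffs $\zeta$, Sobolev embedding (applied locally, on relatively compact pieces of $\bar M$ with boundary, using the bounded-geometry structure on $\bar M$) gives $\omega\in C^{\infty}(\bar M,\Lambda^{p,q})$, proving $\operatorname{im}(P)\subset C^{\infty}(\bar M,\Lambda^{p,q})$. The only delicate point is the bookkeeping of nested cutoffs in the induction, but this is purely routine because $\zeta_{1}$ can always be chosen inside the same relatively compact neighborhood; the core mechanism is the one-step subelliptic gain of Theorem~\ref{globalize} iterated against the $a\ priori$ bound $\|\square^{k}\omega\|_{0}\le\delta^{k}\|\omega\|_{0}$ coming from the spectral cutoff. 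The main obstacle is conceptual rather than technical: one has to see that the spectral projection $P$, although defined purely by functional calculus on $L^{2}$, automatically produces $C^{\infty}$ vectors for $\square$, which is exactly what is needed to feed the regularity theorem indefinitely.
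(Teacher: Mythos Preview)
Your argument is correct and is essentially the same bootstrap as the paper's: both use that $(\square+1)\omega\in\operatorname{im}(P)$ whenever $\omega\in\operatorname{im}(P)$, then iterate Theorem~\ref{globalize} to climb through $H^{s}_{\rm loc}$ for all $s$ and invoke Sobolev embedding. The only cosmetic difference is that the paper phrases the induction directly on the inclusion $\operatorname{im}(P)\subset H^{s}_{\rm loc}$, whereas you track a fixed $\omega$ and nested cutoffs; your quantitative bound $\|\square^{k}\omega\|_{0}\le\delta^{k}\|\omega\|_{0}$ is correct but not actually needed for the qualitative conclusion.
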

\begin{proof}We show that ${\rm im}(P)\subset H^{s}_{\rm loc}(\bar M,\Lambda^{p,q})$ for all $s$.  Let $U, \zeta, \zeta_{1}$ be as in the previous theorem.  Since ${\rm im}((\square +1)^{-1})={\rm Dom}(\square)$ we have the following.  For every $u\in {\rm Dom}(\square)$ with $\square u +u\in H^{s}_{\rm loc}(\bar M)$, we have $u\in H^{s+1}_{\rm loc}(\bar M)$ and  
\[
 \|\zeta u\|_{s+1}^2\le C_s(\|\zeta_{1}(\square +1)u\|_s^2+\|(\square +1)u\|_0^2).
\]
\noindent
Let $u \in {\rm im}(P)$.  Applying the theorem with $s=0$, we have ${\rm im}(P)\subset H^{1}_{\rm loc}(M,\Lambda^{p,q})$.  Now assume $u\in {\rm im}(P)\subset H^{s-1}_{\rm loc}(\bar M,\Lambda^{p,q})$.  Then $(\square+1) u=(\square+1)Pu=P(\square+1)u\in H^{s-1}_{\rm loc}(\bar M,\Lambda^{p,q})$.  We conclude that $u\in H^{s}_{\rm loc}(\bar M,\Lambda^{p,q})$ and so ${\rm im}(P)\subset H^{s}_{\rm loc}(\bar M,\Lambda^{p,q})$.\end{proof}

 \noindent
 Here we repeat Theorem 2.2.9 of \cite{FK}, which gives interior Sobolev regularity of the Laplacian. Notice that it is local.
 
 \begin{lemma}Let $U, V$ be regions with $V\subset\bar V\subset U\subset\bar U\subset M$, and let $\zeta_1$ be a real $C^\infty$ function supported in $U$ with $\zeta_1=1$ on $V$. If $\phi\in {\rm Dom}(F)$ and $\zeta_1 F\phi\in H^s(M,\Lambda^{p,q})$ for some $s\ge 0$, then $\zeta\phi\in H^{s+2}(M,\Lambda^{p,q})$ for any real $\zeta\in\Lambda^{0,0}_0(V)$.  \end{lemma}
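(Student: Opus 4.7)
The plan is to reduce this to the classical interior regularity theorem for second-order elliptic operators. The key observation is that since $V \subset \bar V \subset U \subset \bar U \subset M$, all cutoff functions live strictly in the interior of $\bar M$, so the $\bar\partial$-Neumann boundary conditions never enter the argument. The operator $\square = \bar\partial\bar\partial^* + \bar\partial^*\bar\partial$, viewed as a differential operator on the open manifold $M$, has principal symbol $|\xi|^2 \cdot \mathrm{Id}$ on $\Lambda^{p,q}$, hence is elliptic in the interior.

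First I would fix a finite chain of auxiliary cutoffs $\zeta = \eta_0, \eta_1, \ldots, \eta_{N} \in C^\infty_c(V)$ with $\eta_{j+1} = 1$ on a neighborhood of $\mathrm{supp}(\eta_j)$, chosen so that $\eta_N$ is supported where $\zeta_1 = 1$. The point of the chain is that commuting $\square$ past one $\eta_j$ costs one derivative on $\phi$ but preserves the cutoff structure inside the next $\eta_{j+1}$. I would then run the standard induction on $s$.

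For the base case, $\phi \in \mathrm{Dom}(F)$ already gives $\phi \in H^1_{\rm loc}$ in the interior from the form identity $Q(\phi,\phi) = \langle F\phi,\phi\rangle - \|\phi\|^2$ in Lemma \ref{dbar}(iii) together with interior Gårding for the Dirichlet form of $\square$. For the inductive step, assume $\eta_{j}\phi \in H^{s+2-j}$ locally; I would apply the interior elliptic a priori estimate
\[
\|\eta_{j-1} u\|_{t+2}^2 \le C\bigl(\|\square(\eta_{j-1}u)\|_{t}^2 + \|\eta_{j-1} u\|_{t}^2\bigr)
\]
to $u = \phi$ (after a Friedrichs mollification $J_\varepsilon$ to legitimize the computation), and expand
\[
\square(\eta_{j-1} \phi) = \eta_{j-1}\, \square\phi + [\square,\eta_{j-1}]\phi.
\]
The first term is controlled by $\zeta_1 F\phi \in H^s$ since $\eta_{j-1} \prec \zeta_1$, and the commutator $[\square,\eta_{j-1}]$ is a first-order operator supported where $\eta_j = 1$, so by the inductive hypothesis it lies in the correct Sobolev space. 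Taking $\varepsilon \to 0$ and iterating $j$ from $N$ down to $0$ yields $\zeta \phi \in H^{s+2}$.

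The main obstacle is the mollification step: one has to choose $J_\varepsilon$ so that $\eta_{j-1} J_\varepsilon \phi$ lies in the domain of $\square$ acting on smooth forms (not just $\mathrm{Dom}(F)$) and so that $[\square, J_\varepsilon]\eta_{j}\phi$ remains uniformly bounded in $H^t$ as $\varepsilon \to 0$, which is the standard Friedrichs lemma argument. Once the mollification is in place, the commutator bookkeeping above closes the bootstrap and delivers the full $H^{s+2}$ gain of two derivatives, exactly as in the proof of Theorem 2.2.9 of \cite{FK} that we are citing.
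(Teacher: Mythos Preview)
Your proposal is correct and is essentially the standard interior elliptic regularity bootstrap that constitutes the proof of Theorem 2.2.9 in \cite{FK}; the paper itself gives no independent argument for this lemma but simply quotes that result. Your indexing (induction on $j$ with $\eta_j\phi\in H^{s+2-j}$, starting from $N\ge s+1$ so that the base case $\eta_N\phi\in H^1$ suffices) checks out, and the Friedrichs mollification step you flag is exactly the one Folland--Kohn use.
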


\begin{rem}{\rm We have that the images of the spectral projections of $\square$ corresponding to bounded intervals consist of forms that are smooth to the boundary, but we need that these forms belong to Sobolev spaces as in the interior as well. We cannot glue local estimates together as in \cite{GHS} because the last term in 

\[ \|\zeta u\|_{s+1}^2\le C_s(\|\zeta_{1}(\square +1)u\|_s^2+\|(\square +1)u\|_0^2)\]

\noindent
is not cut off and the proof uses crucially the compact support of the cutoff functions. To adjust for this, we will need to modify a number of claims from Sections 2.3 and 2.4 of \cite{FK}. As the proof is long and computationally detailed we will relegate it to an appendix and give the result here below, {\it cf.} Prop. 3.1.11 of \cite{FK}.

\begin{theorem} For every smooth $u\in {\rm Dom}(\square)\cap\Lambda^{p,q}$, we have 
\[\| u\|_{s+1}^2\lesssim \|\square u\|_s^2+ \| u\|^2\]
\noindent
for each positive integer $s$.
\end{theorem}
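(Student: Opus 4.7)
The plan is to deduce this global subelliptic estimate by assembling suitably refined local estimates via a $G$-invariant partition of unity on $\bar M$. Because $X=M/G$ is compact and $M$ has bounded geometry, one can construct a $G$-invariant partition of unity $\{\zeta_i\}_{i\in I}$ on $\bar M$, together with a second family $\{\zeta_i^{(1)}\}$ with $\zeta_i^{(1)}|_{{\rm supp}\,\zeta_i}=1$, so that the supports are locally finite with uniformly bounded multiplicity and all derivatives up to order $s+2$ are uniformly bounded in $i$. Concretely, one picks a finite trivializing cover of $X$, lifts it to $M$, and then translates by elements of $G$, exploiting $G$-invariance to transfer bounds from one fundamental domain to all the others.

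The main technical step is to upgrade the estimate of Theorem \ref{globalize} to a fully cut-off version of the shape
\[\|\zeta u\|_{s+1}^2 \le C_s\bigl(\|\zeta^{(1)}\square u\|_s^2 + \|\zeta^{(1)} u\|_0^2\bigr)\]
for smooth $u\in{\rm Dom}(\square)\cap\Lambda^{p,q}$, with $C_s$ independent of the location of $\zeta$. This is the content foreshadowed in the remark preceding the statement: one must retrace the tangential and normal subelliptic estimates of Sections 2.3 and 2.4 of \cite{FK} and verify that every place where the global $L^2$ norm of $u$ enters --- commutators of tangential derivatives against the Levi form, coercivity estimates for the form $Q$, and elliptic error terms in the interior --- can be replaced by $\|\zeta^{(1)}u\|_0^2$. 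Since all operators in play are local, this amounts to inserting intermediate cutoffs between $\zeta$ and $\zeta^{(1)}$ at each commutator and integration by parts, and carefully tracking where the $\bar\partial$-Neumann boundary condition is used to cancel boundary contributions.

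Once the cut-off local estimate is in hand, the global bound follows by summing over $i$ and using locally finite overlap of the cover:
\[\|u\|_{s+1}^2 \lesssim \sum_i \|\zeta_i u\|_{s+1}^2 \lesssim \sum_i \bigl(\|\zeta_i^{(1)}\square u\|_s^2 + \|\zeta_i^{(1)} u\|_0^2\bigr)\lesssim \|\square u\|_s^2 + \|u\|_0^2.\]
The uniformity of $C_s$ across the cover, which is ensured by $G$-invariance of the partition and bounded geometry of $M$, is precisely what makes the sum remain finite instead of blowing up as in the compactly supported setting of \cite{GHS}.

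The principal obstacle is the refinement step. Localizing the $L^2$ error terms in the Folland--Kohn proof without disturbing the delicate balance between tangential and normal derivatives at the pseudoconvex boundary --- in particular preserving the use of $\sigma(\vartheta,d\rho)\omega=0$ to eliminate bad boundary contributions --- is genuinely computational. This is precisely why the author announces in the preceding remark that the full argument will be relegated to an appendix.
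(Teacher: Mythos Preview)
Your proposal is correct and matches the paper's approach: the paper proves exactly the fully cut-off local estimate $\|\zeta u\|_{s+1}^2\lesssim \|\zeta_0 F u\|_s^2+\|\zeta_0 u\|^2$ (Lemma~\ref{bigapriori} in the appendix, obtained by reworking Folland--Kohn 2.4.6 and 2.4.8 with nested cutoffs) and then glues via a $G$-invariant partition of unity as in \cite{G,S1}. You have correctly identified both the obstacle---that the uncut $L^2$ term in Theorem~\ref{globalize} prevents naive summation---and the remedy.
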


\begin{proof} From Lemma \ref{bigapriori} and again as in \cite{G} and \cite{S1} we may construct appropriate partitions of unity and glue together the local {\it a priori} estimates
\[\|\zeta u\|_{s+1}^2\lesssim \|\zeta_0F u\|_s^2+ \|\zeta_0 u\|^2 \quad (u\in {\rm Dom}(\square)\cap C^\infty)\]
to obtain the global estimate. 
\end{proof}

\begin{corollary}\label{bomb2}Let $q>0$ and $\square=\int_{0}^{\infty}\lambda dE_{\lambda}$ be the spectral decomposition of the Laplacian in $L^{2}(M,\Lambda^{p,q})$.  If $\delta>0$ and $P=\int_{0}^{\delta}dE_{\lambda}$ then ${\rm im}(P)\subset H^{\infty}(M,\Lambda^{p,q})$.\end{corollary}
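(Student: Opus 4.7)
The plan is to prove ${\rm im}(P)\subset H^s(M,\Lambda^{p,q})$ for every $s\geq 0$ by induction on $s$, bootstrapping by iterating the preceding theorem and feeding the output back in via spectral calculus.

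First I would note that any $u\in{\rm im}(P)$ already satisfies the smoothness hypothesis of the preceding theorem: Corollary \ref{bomb} places ${\rm im}(P)$ inside $C^\infty(\bar M,\Lambda^{p,q})$, so $u$ is smooth up to the boundary. Moreover, since $\lambda^k$ is bounded on the spectral support $[0,\delta]$ of $P$, the spectral calculus gives $u\in{\rm Dom}(\square^k)$ with
\[\square^k u=\int_0^\delta \lambda^k\,dE_\lambda u\in{\rm im}(P)\]
for every $k\geq 0$. In particular, $\square$ preserves ${\rm im}(P)$.

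The induction then runs as follows. The base case $s=0$ is immediate from ${\rm im}(P)\subset L^2(M,\Lambda^{p,q})=H^0(M,\Lambda^{p,q})$. For the inductive step, assume ${\rm im}(P)\subset H^s(M,\Lambda^{p,q})$ and fix $u\in{\rm im}(P)$. By the observation above, $\square u\in{\rm im}(P)$, so by the inductive hypothesis $\square u\in H^s(M,\Lambda^{p,q})$. Applying the preceding theorem to the smooth form $u$ gives
\[\|u\|_{s+1}^2\lesssim\|\square u\|_s^2+\|u\|^2<\infty,\]
so $u\in H^{s+1}(M,\Lambda^{p,q})$ and the induction advances, yielding ${\rm im}(P)\subset\bigcap_s H^s(M,\Lambda^{p,q})=H^\infty(M,\Lambda^{p,q})$.

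The delicate point is interpreting the a priori estimate of the preceding theorem for a smooth $u$ not already known to lie in $H^{s+1}(M,\Lambda^{p,q})$, since otherwise one worries the left-hand side is \emph{a priori} infinite. The standard resolution is a Friedrichs mollification: approximate $u$ by forms $u_\varepsilon\in H^{s+1}(M,\Lambda^{p,q})$, apply the estimate to each to get uniform control from $\|\square u\|_s$ and $\|u\|$, and pass to the limit. This is feasible here precisely because $M$ has bounded geometry coming from the free $G$-action and because $u$ is already globally smooth up to $\bar M$ by Corollary \ref{bomb}, so the mollifications approximate $u$ in every relevant norm. Once that technicality is granted, the induction closes immediately.
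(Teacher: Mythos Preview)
Your argument is correct and is exactly the bootstrapping the paper has in mind: the corollary is stated without proof, but it is the global analogue of Corollary~\ref{bomb}, and your induction---using that $\square$ preserves ${\rm im}(P)$ via spectral calculus and then feeding $\square u\in H^s$ into the global a~priori estimate---is the intended deduction.

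Your final paragraph on Friedrichs mollification is unnecessary here. The global estimate is obtained by summing the local estimates $\|\zeta u\|_{s+1}^2\lesssim\|\zeta_0 Fu\|_s^2+\|\zeta_0 u\|^2$ over a bounded-geometry partition of unity; since $u\in C^\infty(\bar M)$ by Corollary~\ref{bomb}, each $\zeta u$ is smooth with compact support and hence lies in every $H^{s+1}$, so each local left-hand side is already finite. The finiteness of $\|u\|_{s+1}$ then follows directly from the summation once the right-hand side is finite, with no approximation step needed.
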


\noindent
We need the following fact about Sobolev spaces on manifolds with boundary.
 
\begin{definition}For $s>0$, denote by $H^{-s}(\bar M)$ the dual space of $H^s(\bar M)$. {\it I.e.} $H^{-s}(\bar M)=(H^s(\bar M))'$.\end{definition}
 
\begin{lemma} Let $M$ be a manifold with boundary and $s>0$. Then $H^{-s}(\bar M)$ consists of elements of $H^{-s}(\tilde M)$ whose support is in $\bar M$.\end{lemma}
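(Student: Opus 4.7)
The plan is to realize $H^s(\bar M)$ as a quotient of $H^s(\tilde M)$ and compute the dual by standard Banach--space duality. Adopt the usual convention that $H^s(\bar M)$ is the image of the restriction map $r:H^s(\tilde M)\to H^s(\bar M)$, $u\mapsto u|_M$, endowed with the quotient norm; equivalently $H^s(\bar M)=H^s(\tilde M)/N_s$, where $N_s:=\ker r=\{u\in H^s(\tilde M):u|_M=0\}$. Because $r$ is a surjective quotient map, its transpose $r^*:H^{-s}(\bar M)\to(H^s(\tilde M))'=H^{-s}(\tilde M)$ is an isometric embedding whose image is the annihilator $N_s^{\perp}:=\{f\in H^{-s}(\tilde M):\langle f,u\rangle=0\text{ for all }u\in N_s\}$. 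The lemma thus reduces to the identification $N_s^{\perp}=\{f\in H^{-s}(\tilde M):\mathrm{supp}(f)\subset\bar M\}$.

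One inclusion is immediate. If $f\in N_s^{\perp}$ and $\phi\in C_c^\infty(\tilde M\setminus\bar M)$, then $\phi$ vanishes on $M$ and hence lies in $N_s$, so $\langle f,\phi\rangle=0$; this says precisely that $f$ is the zero distribution on the open set $\tilde M\setminus\bar M$, i.e.\ $\mathrm{supp}(f)\subset\bar M$.

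The converse is the substantive direction, and I would reduce it to the density statement that $C_c^\infty(\tilde M\setminus\bar M)$ is norm-dense in $N_s$ for every $s\geq 0$. Granted the density, if $f$ is supported in $\bar M$ then $\langle f,\phi\rangle=0$ on a dense subset of $N_s$, and continuity of the $H^{-s}$--$H^s$ pairing propagates this to all of $N_s$. To prove the density itself, I would use a partition of unity on $\tilde M$ subordinate to interior charts (on which $u\in N_s$ is simply zero) and boundary charts straightening $bM$ to the hyperplane $\{x_n=0\}$ with $M$ corresponding to $\{x_n<0\}$, reducing to the model problem on $\mathbb R^n$: approximate $u\in H^s(\mathbb R^n)$ with $\mathrm{supp}(u)\subset\{x_n\geq 0\}$ by $C_c^\infty(\{x_n>0\})$. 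Translate outward, $u_\epsilon(x):=u(x-\epsilon e_n)$, so that $\mathrm{supp}(u_\epsilon)\subset\{x_n\geq\epsilon\}$; Plancherel gives $u_\epsilon\to u$ in $H^s$. A compactly supported smooth cutoff of $u_\epsilon$ followed by convolution with a mollifier of radius less than $\epsilon$ yields approximants in $C_c^\infty(\{x_n>0\})$ converging to $u$ in $H^s$.

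The main obstacle is bookkeeping rather than deep analysis: the translation--cutoff--mollification procedure must be patched across charts so that the global approximants remain $H^s$-close to $u$ and supported off $\bar M$. Because $X=M/G$ is compact and the construction is local in character, this is handled by the standard partition--of--unity machinery, and no hypothesis beyond $s>0$ (needed only so that $H^{-s}(\bar M)$ has been defined) is required to carry the duality through.
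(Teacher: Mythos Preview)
Your argument is correct and is essentially the standard duality proof found in Lions--Magenes. The paper itself does not give an argument at all: its entire proof is the citation ``See Remark 12.5 of \cite{LM}.'' So you have supplied the content behind that reference, namely the identification $H^{-s}(\bar M)\cong N_s^\perp\subset H^{-s}(\tilde M)$ via the quotient description $H^s(\bar M)=H^s(\tilde M)/N_s$, together with the density of $C_c^\infty(\tilde M\setminus\bar M)$ in $N_s$ by the translate--cutoff--mollify procedure in boundary charts. One small remark: your final paragraph invokes the compactness of $X=M/G$, but the lemma is stated for an arbitrary manifold with boundary and the density argument is purely local, so a locally finite partition of unity suffices without any cocompactness hypothesis.
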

 
\begin{proof}See Remark 12.5 of \cite{LM}.\end{proof}
 
\begin{corollary}\label{applied}Let $q>0$ and $\square=\int_{0}^{\infty}\lambda dE_{\lambda}$ be the spectral decomposition of the Laplacian in $L^{2}(M,\Lambda^{p,q})$.  If $\delta>0$ and $P=\int_{0}^{\delta}dE_{\lambda}$ then $P: H^{-s}(\bar M,\Lambda^{p,q})\to H^{s}(M,\Lambda^{p,q})$ for any positive integer $s$.\end{corollary}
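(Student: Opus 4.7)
The plan is to combine the smoothing property of $P$ (Corollary \ref{bomb2} together with Corollary \ref{bomb}, which give smoothness up to the boundary) with self-adjointness of the spectral projection and the duality identification $H^{-s}(\bar M)=(H^s(\bar M))'$ supplied by the preceding lemma. The idea is that once we know $P$ sends $L^2$ into $H^s(\bar M)$ boundedly, dualizing produces a bounded extension $H^{-s}(\bar M)\to L^2$, and a final application of $P$ (using $P^2=P$) pushes the result into $H^s(M)$.

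First I would show that $P:L^2(M,\Lambda^{p,q})\to H^s(\bar M,\Lambda^{p,q})$ is bounded for each positive integer $s$. Corollaries \ref{bomb} and \ref{bomb2} already give that the range lies in $H^s(\bar M,\Lambda^{p,q})$, so boundedness follows from the closed graph theorem: if $u_n\to 0$ in $L^2$ and $Pu_n\to v$ in $H^s$, then $Pu_n\to 0$ in $L^2$ (since $P$ is bounded on $L^2$) and $Pu_n\to v$ in $L^2$ via the continuous inclusion $H^s(\bar M)\hookrightarrow L^2(M)$, forcing $v=0$.

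Next, $P$ is self-adjoint on $L^2$ as a spectral projection, so the transpose of the bounded map $P:L^2\to H^s(\bar M)$ is a bounded operator $P':H^{-s}(\bar M)\to L^2(M)$ defined by $\langle P'f,\phi\rangle_{L^2}=\langle f,P\phi\rangle_{H^{-s},H^s}$ for $\phi\in L^2$. For $f\in L^2\subset H^{-s}(\bar M)$ (the inclusion coming from the dual lemma) and any $\phi\in L^2$, this pairing equals $\langle f,P\phi\rangle_{L^2}=\langle Pf,\phi\rangle_{L^2}$, so $P'$ agrees with $P$ on $L^2$ and constitutes a bounded extension. Composing with the bounded map $P:L^2\to H^s(M)$ from the previous step, and using $P^2=P$ so that the composition coincides with $P$ on the dense subspace $L^2\subset H^{-s}(\bar M)$, yields the desired bounded operator $P:H^{-s}(\bar M,\Lambda^{p,q})\to H^s(M,\Lambda^{p,q})$.

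The main obstacle is bookkeeping rather than hard analysis: one must be careful that the duality pairing used to define $P'$ is compatible with the identification of $H^{-s}(\bar M)$ as a subspace of $H^{-s}(\tilde M)$ supported in $\bar M$ (the preceding lemma), so that elements of $L^2(M)$ extended by zero to $\tilde M$ really sit inside $H^{-s}(\bar M)$, and that the extension of $P$ obtained via transposition genuinely restricts to the original $L^2$ projector. Once this identification is in place the argument reduces to the abstract fact that any bounded operator between Hilbert spaces dualizes, and the composition $P=P\circ P$ takes us back up into $H^s(M)$.
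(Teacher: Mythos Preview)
Your proof is correct and follows the same strategy as the paper: establish that $P$ maps $L^2$ boundedly into $H^s$, dualize to obtain a bounded extension $H^{-s}(\bar M)\to L^2$, and then invoke $P^2=P$ to land in $H^s$. The paper's proof is extremely terse (it simply asserts ``It follows that $P:H^{-s}(\bar M)\to L^2(M)$'' and then uses density of $H^\infty$), whereas you have supplied the missing justifications---the closed graph argument for boundedness and the explicit identification of the transpose with $P$ on $L^2$---but the underlying idea is identical.
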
 
 
\begin{proof} In Lemma \ref{bomb2} we established that spectral projections $P$ of $\square$ take $L^2(M)$ to $H^s(M)$ for all $s>0$. It follows that $P:H^{-s}(\bar M)\to L^2(M)$. Since $P^2=P$ on $H^\infty(M)\subset L^2(M)$, a dense subspace of all the $H^s(\bar M)$, $(s\in\mathbb R)$ we conclude that $P:H^{-s}(\bar M)\to H^s(M)$ for all $s>0$. \end{proof}
 
\section{Dolbeault-Hodge-Kodaira}

Let us describe the  reduced $L^2$ Dolbeault cohomology spaces on a
complex (generally non-compact) manifold $M$ with a given hermitian metric.
Denote the Hilbert space of all (measurable) square-integrable $(p,q)$-forms on $M$ by $L^2(M,\Lambda^{p,q})$.
The operator
\[\overline \partial :L^2(M,\Lambda^{p,q})\longrightarrow L^2(M,\Lambda^{p,q+1})\]
is defined as the maximal operator, {\it i.e.} its domain
$D^{p,q}=D^{p,q}(\overline \partial;M)$ is the set of all
$\omega \in L^2(M,\Lambda^{p,q})$ such that $\overline \partial \omega \in L^2(M,\Lambda^{p,q+1})$ where $\overline \partial $ is
applied in the sense of distributions. Obviously $\overline \partial^2=0$ on $D^{p,q}$
and we can form a complex
\[ L^2(M,\Lambda^{p,\bullet}):\quad 0\longrightarrow D^{p,0}\longrightarrow
D^{p,1}
\longrightarrow\dots\longrightarrow D^{p,n}\longrightarrow 0.
\]
The {\it reduced $L^2$-Dolbeault cohomology spaces of} $M$ are defined by:
\[L^2\bar H^{p,q}(M)=\ker (\overline \partial:D^{p,q}\to D^{p,q+1})/
\overline{{\rm im} \ (\overline \partial :D^{p,q-1}\to D^{p,q})}.\]
\noindent
Since $\ker\overline \partial $ is a closed subspace in $L^2$, the reduced cohomology space $L^2\bar H^{p,q}(M)$ is a Hilbert space. Note that the space $L^2\bar H^{0,0}(M)$ coincides with the space $L^2\mathcal O(M)$ of all square-integrable holomorphic functions on $M$.

\begin{lemma}\label{decomp} The following orthogonal decompositions hold:
\[ L^2(M,\Lambda^{\bullet})= \overline{{{\rm im} \ } \overline \partial} \oplus \ker \square \oplus \overline{{\rm im} \ \overline \partial^*} \qquad \ker   \overline \partial  = \overline{{\rm im} \  \overline \partial} \oplus \ker  \square.\]
In particular, we have an isomorphism of Hilbert $G$-modules
\begin{equation}L^2\bar H^{p,q}(M)=\ker  \square_{p,q}.\end{equation} \end{lemma}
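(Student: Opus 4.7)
The plan is to deduce both orthogonal decompositions from the standard range/kernel-of-adjoint decomposition available for any closed, densely defined operator, together with the integral identity in Lemma \ref{dbar}(iii). Throughout I will think of $\overline\partial$ as the maximal operator between the relevant $L^2$ Hilbert spaces, so that it is closed and densely defined, and $\overline\partial^*$ is its Hilbert-space adjoint.

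First I would record the three elementary facts I need. (1) For a closed, densely defined operator $T$ between Hilbert spaces $\mathcal{H}_1,\mathcal{H}_2$, one has $\mathcal{H}_2 = \overline{\mathrm{im}\,T} \oplus \ker T^*$, applied to $T=\overline\partial$ and to $T=\overline\partial^*$ (noting $\overline\partial^{**}=\overline\partial$). (2) Because $\overline\partial^2=0$ on its natural domain, taking closures gives $\overline{\mathrm{im}\,\overline\partial}\subset\ker\overline\partial$ and dually $\overline{\mathrm{im}\,\overline\partial^*}\subset\ker\overline\partial^*$. (3) $\ker\square = \ker\overline\partial\cap\ker\overline\partial^*$: the inclusion $\supset$ is immediate from the formula for $\square$ once one checks from Lemma \ref{dbar}(iii) that any $\omega\in\ker\overline\partial\cap\ker\overline\partial^*$ trivially satisfies the domain conditions defining $\mathrm{Dom}(\square)$, and the inclusion $\subset$ follows from the integral identity $(\square\omega,\omega)=\|\overline\partial\omega\|_0^2+\|\overline\partial^*\omega\|_0^2$ which forces both terms to vanish.

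Next I would derive the decompositions. From (1) applied to $\overline\partial$, we have $L^2(M,\Lambda^\bullet)=\overline{\mathrm{im}\,\overline\partial}\oplus\ker\overline\partial^*$. Intersecting with $\ker\overline\partial$ and using $\overline{\mathrm{im}\,\overline\partial}\subset\ker\overline\partial$ from (2) yields
\[
\ker\overline\partial=\overline{\mathrm{im}\,\overline\partial}\,\oplus\,(\ker\overline\partial\cap\ker\overline\partial^*)=\overline{\mathrm{im}\,\overline\partial}\,\oplus\,\ker\square,
\]
which is the second decomposition. For the first one, I start again from $L^2=\overline{\mathrm{im}\,\overline\partial}\oplus\ker\overline\partial^*$ and further split the second summand. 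Applying (1) to $\overline\partial^*$ gives $\ker\overline\partial=(\overline{\mathrm{im}\,\overline\partial^*})^\perp$, so within $\ker\overline\partial^*$ we may peel off $\overline{\mathrm{im}\,\overline\partial^*}$ (which lies inside $\ker\overline\partial^*$ by (2)) and the remaining orthogonal piece is $\ker\overline\partial^*\cap\ker\overline\partial=\ker\square$. Stringing these together produces the three-fold orthogonal decomposition
\[
L^2(M,\Lambda^\bullet)=\overline{\mathrm{im}\,\overline\partial}\oplus\ker\square\oplus\overline{\mathrm{im}\,\overline\partial^*}.
\]

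Finally, the isomorphism $L^2\bar H^{p,q}(M)\cong\ker\square_{p,q}$ drops out of the second decomposition: by definition $L^2\bar H^{p,q}(M)=\ker\overline\partial/\overline{\mathrm{im}\,\overline\partial}$, and the orthogonal splitting $\ker\overline\partial=\overline{\mathrm{im}\,\overline\partial}\oplus\ker\square$ identifies the quotient with $\ker\square$ isometrically and $G$-equivariantly (the $G$-equivariance is automatic since $\overline\partial$ and hence $\square$ commute with the action). The only nontrivial step in the whole argument is the identification $\ker\square=\ker\overline\partial\cap\ker\overline\partial^*$, and the main obstacle there—checking that elements of $\ker\overline\partial\cap\ker\overline\partial^*$ actually lie in $\mathrm{Dom}(\square)$ despite the $\overline\partial$-Neumann boundary conditions built into it—is handled cleanly by the description of $\mathrm{Dom}(\square)$ in Lemma \ref{dbar}, which requires nothing beyond membership in $\mathrm{Dom}(\overline\partial)\cap\mathrm{Dom}(\overline\partial^*)$ when $\overline\partial\omega$ and $\overline\partial^*\omega$ both vanish.
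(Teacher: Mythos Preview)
Your argument is correct and is the standard Hodge-theoretic derivation. The paper itself states Lemma~\ref{decomp} without proof, treating it as a well-known consequence of the functional-analytic setup for $\overline\partial$ and $\overline\partial^*$ together with the identity in Lemma~\ref{dbar}(iii); your write-up supplies exactly that routine verification.
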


\begin{corollary}${\rm im} \  \overline \partial \subset \overline{{\rm im} \  \square }$.\end{corollary}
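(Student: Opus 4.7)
The plan is to combine the orthogonal decompositions from Lemma \ref{decomp} with the self-adjointness of $\square$. Since $\square$ is self-adjoint, we have the standard identity
\[\overline{\mathrm{im}\ \square} = (\ker \square)^\perp,\]
so it suffices to show that every element of $\mathrm{im}\ \bar\partial$ is orthogonal to $\ker \square$.

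First I would invoke the decomposition
\[\ker \bar\partial = \overline{\mathrm{im}\ \bar\partial} \oplus \ker \square\]
from Lemma \ref{decomp}, which immediately gives $\overline{\mathrm{im}\ \bar\partial} \perp \ker \square$. In particular $\mathrm{im}\ \bar\partial \subset \overline{\mathrm{im}\ \bar\partial} \subset (\ker \square)^\perp = \overline{\mathrm{im}\ \square}$, which is the desired conclusion.

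As a sanity check one can verify the orthogonality directly: if $\omega \in \mathrm{Dom}(\bar\partial)$ and $\psi \in \ker \square$, then the integral identity in Lemma \ref{dbar}(iii) gives $\|\bar\partial \psi\|^2 + \|\bar\partial^*\psi\|^2 = \langle \square \psi, \psi\rangle = 0$, so $\psi \in \mathrm{Dom}(\bar\partial^*)$ with $\bar\partial^*\psi = 0$, and hence $\langle \bar\partial \omega, \psi\rangle = \langle \omega, \bar\partial^*\psi\rangle = 0$.

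There is no substantive obstacle here: the result is a two-line consequence of the Hodge-Kodaira decomposition already established, together with the abstract fact that a self-adjoint operator has closed-range complement equal to the orthogonal complement of its kernel.
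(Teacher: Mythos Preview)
Your argument is correct and is exactly the approach implicit in the paper: the corollary is stated there without proof, immediately after Lemma~\ref{decomp}, because the orthogonal decomposition together with the self-adjointness of $\square$ (giving $\overline{{\rm im}\,\square}=(\ker\square)^\perp$) makes the inclusion immediate, just as you have written.
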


 
\bigskip
 
             \section{The $G$-Fredholm Property of $\square$}

\noindent
We will need a description of $G$-operators in terms of their Schwartz kernels, {\it cf.} \eqref{domtra}.  If $P\in \mathcal B(L^{2}(M))^{G}$, its kernel $K_{P}$ satisfies
\[ K_P({\bf x},{\bf y})= K_P({\bf x}t,{\bf y}t),\quad t\in G.\]
\noindent
Thus $K_{P}$ descends to a distribution on the quotient $\frac{M\times M}{G}$.  The measure taken on $\frac{M\times M}{G}$ is simply the quotient measure.
 
  \begin{lemma} If $P: L^2(M) \to H^{\infty}(M)$ is a self-adjoint projection, then its Schwartz kernel $K_P$ is smooth. \end{lemma}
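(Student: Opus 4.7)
The plan is to upgrade the hypothesis $P: L^{2}(M)\to H^{\infty}(M)$ into a two-sided regularizing statement $P:\mathcal{E}'(M)\to C^{\infty}(M)$ and then invoke the Schwartz kernel theorem to conclude $K_{P}\in C^{\infty}$. The whole argument rests on only three inputs: closed graph, self-adjointness, and the identity $P^{2}=P$.

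First I would observe that for every $s\ge 0$, the map $P:L^{2}(M)\to H^{s}(M)$ has closed graph (any $L^{2}$-Cauchy sequence whose images converge in $H^{s}$ must converge in $L^{2}$ to the same $P$-image), hence is bounded by the closed graph theorem. Then, using the duality $H^{-s}(\bar M)=(H^{s}(\bar M))'$ recalled just before Corollary \ref{applied} and the self-adjointness of $P$, taking adjoints produces a bounded map $P:H^{-s}(\bar M)\to L^{2}(M)$ for every $s\ge 0$. Composing the two and exploiting $P=P\circ P$, I get
\[ P:H^{-s}(\bar M)\longrightarrow H^{t}(M),\qquad s,t\ge 0,\]
bounded for all $s,t$. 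In particular, since every compactly supported distribution on $M$ lies in some $H^{-s}(\bar M)$ and $\bigcap_{t}H^{t}(M)\subset C^{\infty}(M)$ by Sobolev embedding, $P$ extends to a continuous map $\mathcal{E}'(M)\to C^{\infty}(M)$.

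Finally I would quote the standard form of the Schwartz kernel theorem which identifies kernels in $C^{\infty}(M\times M)$ precisely with operators that extend continuously from $\mathcal{E}'(M)$ to $C^{\infty}(M)$ (the symmetry needed to get smoothness in both variables is automatic here because $P=P^{*}$, so the transpose of $P$ has the same mapping property). This yields smoothness of $K_{P}$ on $M\times M$, and after passing to the quotient by the diagonal $G$-action one gets the asserted smoothness of $K_{P}$ on $(M\times M)/G$.

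I expect the only delicate point to be keeping track of Sobolev spaces in the presence of the boundary: the duality that turns $P:L^{2}\to H^{s}(M)$ into $P:H^{-s}(\bar M)\to L^{2}$ requires the precise identification $H^{-s}(\bar M)=(H^{s}(\bar M))'$ rather than any naive self-duality of $H^{s}(M)$. Once that bookkeeping is set up exactly as in the lemma preceding Corollary \ref{applied}, the chain of inclusions is routine and the kernel theorem does the rest.
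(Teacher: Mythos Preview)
Your argument is correct and follows the same strategy as the paper: bootstrap $P$ via self-adjointness and $P^{2}=P$ to a map $H^{-\infty}\to H^{\infty}$, then read off smoothness of the kernel. The paper's three-line proof leaves the closed-graph/duality step you spell out implicit and, rather than citing the abstract kernel theorem, writes $K_{P}({\bf x},{\bf y})=(P\delta_{\bf y})({\bf x})$ directly and observes that ${\bf y}\mapsto\delta_{\bf y}$ is smooth with values in $H^{-\infty}_{c}(\bar M)$; this is the same argument in concrete form.
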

 
 \begin{proof} 
 

Since ${\bf y}\mapsto\delta_{\bf y}$ is a smooth function on $\bar M$ with values in $H^{-\infty}_{c}(\bar M)$, the composition
  \[({\bf x},{\bf y})\longmapsto (P\delta_{\bf y})({\bf x}) = \int_{M} K_P({\bf x},{\bf z})\delta_{\bf y}({\bf z})d{\bf z} = K_P({\bf x},{\bf y})\]
 \noindent 
 is jointly smooth.  \end{proof} 
 
 \begin{lemma} If $P\in \mathcal B(L^{2}(M))^{G}$ is a self-adjoint, invariant projection so that ${\rm im}(P)\subset C^{\infty}(M)$, then $K_{P}\in L^{2}(\frac{M\times M}{G})$.\end{lemma}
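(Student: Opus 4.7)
The plan is to combine the identity $P = P^*P$ with the $G$-invariance of $K_P$ and the compactness of $X=M/G$. First I would show that $P\delta_{\bf x}\in L^2(M)$ for each ${\bf x}\in M$. The hypothesis gives $P\colon L^2(M)\to C^\infty(M)$ with $C^\infty(M)$ carrying its standard Fr\'echet topology, and this map is continuous by the closed graph theorem: if $f_n\to f$ in $L^2$ and $Pf_n\to g$ in $C^\infty$, then testing against any $\phi\in C^\infty_c(M)$ shows $Pf_n\to g$ as distributions, while $L^2$-boundedness of $P$ forces $Pf_n\to Pf$ in $L^2$, so $g = Pf$. Evaluation at ${\bf x}$ is then a bounded linear functional on $L^2(M)$, and by Riesz its representative is precisely $P\delta_{\bf x}=K_P(\cdot,{\bf x})\in L^2(M)$.

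Next I would derive the diagonal identity
\[
\int_M |K_P({\bf x},{\bf y})|^2\,d{\bf y} = K_P({\bf x},{\bf x}).
\]
Indeed, since $P^*P = P$ and $P\delta_{\bf x}$ is a continuous function (hence admits pairing with $\delta_{\bf x}$),
\[
\|P\delta_{\bf x}\|_{L^2}^{2} = \langle P^*P\delta_{\bf x},\delta_{\bf x}\rangle = \langle P\delta_{\bf x},\delta_{\bf x}\rangle = (P\delta_{\bf x})({\bf x}) = K_P({\bf x},{\bf x}),
\]
and the left-hand side equals $\int_M |K_P({\bf x},{\bf y})|^2\,d{\bf y}$ by Step 1, using the self-adjointness relation $K_P({\bf y},{\bf x}) = \overline{K_P({\bf x},{\bf y})}$.

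Finally, by $G$-invariance of $K_P$ the function ${\bf x}\mapsto K_P({\bf x},{\bf x})$ descends to a smooth function on the compact base $X$, hence is bounded. Choosing a Borel fundamental domain $D\subset M$ for the free $G$-action on $M$, the set $D\times M$ is a fundamental domain for the diagonal $G$-action on $M\times M$, since given any $({\bf x},{\bf y})$ there is a unique $s\in G$ with ${\bf x}s\in D$. Integrating the $G$-invariant function $|K_P|^2$ over the quotient therefore reduces to
\[
\int_{(M\times M)/G}|K_P|^2\,d\mu = \int_D\!\int_M |K_P({\bf x},{\bf y})|^2\,d{\bf y}\,d{\bf x} = \int_D K_P({\bf x},{\bf x})\,d{\bf x} = \int_X K_P(x,x)\,dx < \infty.
\]
The main obstacle is the first step: justifying that $P\delta_{\bf x}$ is actually an $L^2$ element rather than merely a distribution, which I resolve via the closed graph theorem. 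The remaining manipulations are a standard fiber-integration computation on the principal $G$-bundle $M\to X$ with compact base.
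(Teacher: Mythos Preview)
Your proof is correct and follows essentially the same route as the paper: closed graph theorem plus Riesz to get $K_P({\bf x},\cdot)\in L^2(M)$ for each ${\bf x}$, then $G$-invariance to descend the fibre integral $\phi({\bf x})=\int_M|K_P({\bf x},{\bf y})|^2\,d{\bf y}$ to the compact base $X$, then integrate. The only notable difference is that you invoke the projection identity $P^2=P$ to obtain the diagonal formula $\phi({\bf x})=K_P({\bf x},{\bf x})$ and then appeal to smoothness of $K_P$ for boundedness, whereas the paper instead argues directly that ${\bf x}\mapsto K_P(\cdot,{\bf x})=P\delta_{\bf x}$ is continuous as a map $M\to L^2(M)$ and hence $\phi$ is continuous on $X$; your version has the advantage of making the eventual trace formula ${\rm Tr}_G(P)=\int_X K_P(x,x)\,dx$ appear explicitly.
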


 \begin{proof}Fix ${\bf x}\in M$.  If $P:L^{2}(M)\to C^{\infty}(M)$, the closed graph theorem applied to $P$ implies $u\in L^{2}(M)\mapsto (Pu)({\bf x}) \in \mathbb C$ is a bounded linear functional.  The Riesz representation theorem then gives that there exists a function $h_{\bf x}\in L^{2}(M)$ so that
\[(Pu)({\bf x}) = \langle h_{\bf x},u\rangle \quad u\in L^{2}(M).\]
Since $(Pu)({\bf x})=\int_{M}K_{P}({\bf x},{\bf y})u({\bf y})d{\bf y}$, and agrees with $\langle h_{\bf x},u\rangle$ when $u$ has compact support, $h_{\bf x}=K_{P}({\bf x},\ \cdot \ )$ almost everywhere.  We conclude that for any ${\bf x}\in M$, $\int_{M}|K_{P}({\bf x},{\bf y})|^{2}d{\bf y}$ is finite.  


  Now consider $\phi({\bf x})=\int_{M}|K_{P}({\bf x},{\bf y})|^{2}d{\bf y}$.  The function $\phi$ is constant on orbits since the measure on $M$ is invariant;
\[\phi({\bf x}t)=\int_{M}|K_{P}({\bf x}t,{\bf y})|^{2}d{\bf y}=\int_{M}|K_{P}({\bf x},{\bf y}t^{-1})|^{2}d{\bf y}=\int_{M}|K_{P}({\bf x},{\bf y})|^{2}d{\bf y}=\phi({\bf x}).\]
\noindent
Thus $\phi$ descends to a function on $M/G=X$.  Since the map from $M$ to $C^{-\infty}_c(M)$ defined by ${\bf y}\mapsto \delta_{\bf y}$ is continuous, the composition
\[{\bf y} \mapsto P\delta_{\bf y}= K_P(\cdot, {\bf y})\]
is a continuous function $M\to L^2(M)$.  We may conclude that $\phi:X\to \mathbb R_{+}$ is continuous.  Denote by $\frac{d{\bf x}}{dt}$ the quotient measure on $X$.  The compactness of $X$ together with continuity of $\phi$ imply that $\int_{X}\phi({\bf x})\frac{d{\bf x}}{dt}<\infty$. Thus we have that $K_{P}\in L^{2}(\frac{M\times M}{G})$.
 \end{proof}

Choosing a measurable global section $x$ in $M$ and representing points ${\bf x}\in M$, ${\bf x}\to (t,x)\in G\times X$, we obtain an isomorphism of measure spaces $(M,d{\bf x})\cong (G\times X, dt\otimes dx)$.  Whenever $P\in \mathcal B(L^{2}(M))^{G}$ and $K_{P}\in L^{2}_{\rm loc}(M\times M)$, this isomorphism and the criterion for invariance allow a representation
\[K_P({\bf x},{\bf y})\longrightarrow K_{P}(t,x;s,y) \stackrel {\rm def} =\kappa(ts^{-1};x,y),\quad s,t\in G, \ x,y \in X\]
\noindent
with $\kappa \in L^{2}_{\rm loc}(G\times X \times X).$

 \begin{lemma}\label{normkappa} Let $P\in \mathcal B(L^{2}(M))^{G}$.  Then ${\rm Tr}_{G}(P^{*}P)= \int_{\frac{M\times M}{G}}|K_{P}|^{2}$.\end{lemma}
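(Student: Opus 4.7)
The plan is to reduce the computation to the definition of ${\rm Tr}_G$ as a sum of $G$-traces on the diagonal matrix blocks of $P^*P$ and then invoke the characterizing property ${\rm tr}_G(L_f^*L_f)=\int_G|f(s)|^2\,ds$ of the trace on $\mathcal L_G$ recalled in Section 2.

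Step 1: Matrix elements of $P$. Fix the orthonormal basis $(\psi_l)$ of $L^2(X)$ used to form the decomposition $L^2(M)\cong\bigoplus_l L^2(G)\otimes\psi_l$. Using a measurable section identifying $(M,d{\bf x})\cong(G\times X,dt\otimes dx)$ and the kernel representation $K_P(t,x;s,y)=\kappa(ts^{-1};x,y)$, I would compute the matrix entry $P_{lm}=P_lPP_m$ by applying $P$ to a pure tensor $u(t)\psi_m(x)$ and pairing with $\psi_l$ in the $X$-variable. A direct calculation yields
$$P_{lm}=L_{h_{lm}}, \qquad h_{lm}(t)=\iint_{X\times X}\overline{\psi_l(x)}\,\kappa(t;x,y)\,\psi_m(y)\,dx\,dy,$$
where the passage from $\int h_{lm}(ts^{-1})u(s)\,ds$ to $(L_{h_{lm}}u)(t)=\int h_{lm}(r)u(r^{-1}t)\,dr$ uses unimodularity of $G$.

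Step 2: Trace of the diagonal blocks. Since $(P^*P)_{ll}=\sum_m P_{ml}^*P_{ml}=\sum_m L_{h_{ml}}^*L_{h_{ml}}$, normality of ${\rm tr}_G$ together with the defining formula from Section 2 gives
$${\rm tr}_G\bigl((P^*P)_{ll}\bigr)=\sum_m{\rm tr}_G\bigl(L_{h_{ml}}^*L_{h_{ml}}\bigr)=\sum_m\int_G|h_{ml}(t)|^2\,dt,$$
and therefore, by Definition of ${\rm Tr}_G$,
$${\rm Tr}_G(P^*P)=\sum_l{\rm tr}_G\bigl((P^*P)_{ll}\bigr)=\sum_{l,m}\int_G|h_{ml}(t)|^2\,dt.$$

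Step 3: Parseval and reassembly of the integral. The family $\{\psi_m(x)\overline{\psi_l(y)}\}_{l,m\in\mathbb N}$ is an orthonormal basis of $L^2(X\times X)$, and $h_{ml}(t)$ is precisely the corresponding Fourier coefficient of the function $\kappa(t;\cdot,\cdot)\in L^2_{\rm loc}(X\times X)$. Hence Parseval's identity yields, for almost every $t\in G$,
$$\sum_{l,m}|h_{ml}(t)|^2=\iint_{X\times X}|\kappa(t;x,y)|^2\,dx\,dy.$$
All terms being nonnegative, Tonelli justifies interchanging the sum with the $G$-integral, giving
$${\rm Tr}_G(P^*P)=\iiint_{G\times X\times X}|\kappa(t;x,y)|^2\,dt\,dx\,dy=\int_{(M\times M)/G}|K_P|^2,$$
the last equality recording that the quotient measure on $(M\times M)/G$ is precisely $dt\otimes dx\otimes dy$ under the orbit parametrization $(t_1,x;t_2,y)\mapsto(t_1t_2^{-1},x,y)$.

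The main obstacle is Step 1: making the identification $P_{lm}=L_{h_{lm}}$ precise as an equality in $\mathcal L_G$ and verifying that the distribution $h_{lm}$ coming from the general Schwartz-kernel representation \eqref{deco} agrees with the explicit integral formula above. Once this is done, the remainder is Parseval plus Tonelli plus the description of the quotient measure. A small technical remark: the identity should be read in $[0,\infty]$, so that it holds for every invariant $P$ with both sides simultaneously finite or infinite; in particular, $P$ lies in ${\rm Dom}_{1/2}({\rm Tr}_G)$ iff $K_P\in L^2((M\times M)/G)$.
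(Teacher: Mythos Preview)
Your proposal is correct and follows essentially the same route as the paper: decompose $P$ into matrix blocks $L_{h_{lm}}$, expand ${\rm Tr}_G(P^*P)=\sum_{l,m}\|h_{lm}\|_{L^2(G)}^2$ via the characterizing property ${\rm tr}_G(L_f^*L_f)=\|f\|^2$ and normality, and then recognize the $h_{lm}$ as Fourier coefficients of $\kappa(t;\cdot,\cdot)$ with respect to a product basis of $L^2(X\times X)$ to recover $\|\kappa\|_{L^2(G\times X\times X)}^2$ by Parseval. Your added remarks on Tonelli, unimodularity, and the quotient-measure identification only make explicit what the paper leaves implicit.
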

 \begin{proof} Let $(\psi_{k})_{k}$ be an orthonormal basis for $L^{2}(X)$.  In the decomposition $L^{2}(M)\cong \bigoplus_{k}L^{2}(G)\otimes \psi_{k}$, the invariant operator $P$ has a matrix representation $P\to [L_{h_{kl}}]_{kl}$.  In terms of this, we compute
\[{\rm Tr}_{G}(P^{*}P)=\sum_{l}{\rm tr}_{G}((P^{*}P)_{ll})=\sum_{l}{\rm tr}_{G}\left(\sum_{k}(P^{*})_{lk}P_{kl}\right)\]\[=\sum_{l}{\rm tr}_{G}\left(\sum_{k}P^{*}_{kl}P_{kl}\right)=\sum_{kl}{\rm tr}_{G}(L^{*}_{h_{kl}}L_{h_{kl}})=\sum_{kl}\|h_{kl}\|_{L^{2}(G)}^{2}\]
\noindent
by normality of ${\rm tr}_{G}$.  

Now, except on a set of measure zero, we have a description of $P$  
\[(Pu)({\bf x})=\int_{M}K_{P}({\bf x},{\bf y})u({\bf  y})d{\bf y}=(Pu)(t,x)=\int_{G\times X} ds dy\ \kappa(s;x,y)u(st,y).\]
Now, the distributional kernels $h_{ij}$ can be recovered from $\kappa$ by projecting into the summands in $L^{2}(M)\cong \bigoplus_{l} (L^{2}(G)\otimes \psi_{l})$,
\[h_{ij} = \int_{X\times X} \ dxdy\ \kappa(\ \cdot \ ;x,y)\psi_{j}(y)\overline\psi_{i}(x).\]
Let us compute the norm of $\kappa$ in $L^{2}(G\times X\times X)$.  Since $(\psi_{j})_{j}$ is an orthonormal basis for $L^{2}(X)$, the set $(\overline \psi_{i}\otimes \psi_{j})_{ij}$ forms an orthonormal basis for $L^{2}(X\times X)$.  By construction, $h_{ij}$ is equal the $ij^{th}$ Fourier coefficient of $\kappa$ with respect to the decomposition $L^{2}(G\times X\times X)\cong \bigoplus_{ij}(L^{2}(G)\otimes \psi_{i}\otimes \psi_{j})$.  Hence 
\[\sum_{ij} \|h_{ij}\|^{2}_{L^{2}(G)}=\|\kappa \|_{L^{2}(G\times X\times X)}^{2}.\]
\noindent
Thus ${\rm Tr}_{G}(P^{*}P)=\|\kappa \|_{L^{2}(G\times X\times X)}^{2} = \int_{\frac{M\times M}{G}}|K_{P}({\bf x},{\bf y})|^{2}\ \frac{d{\bf x}d{\bf y}}{dt}$.\end{proof}

 \begin{corollary}\label{theworks} If $P\in \mathcal B(L^{2}(M))^{G}$ is an invariant self-adjoint projection such that ${\rm im}(P)\subset H^{\infty}(M)$, then ${\rm Tr}_{G}(P)<\infty$.\end{corollary}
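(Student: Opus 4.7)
The plan is to chain together the two lemmas immediately preceding Corollary~\ref{theworks}. The key observation is that when $P$ is a self-adjoint projection, $P = P^2 = P^*P$, so bounding $\mathrm{Tr}_G(P)$ is the same as bounding $\mathrm{Tr}_G(P^*P)$, and Lemma~\ref{normkappa} expresses the latter as the $L^2$-norm squared of the Schwartz kernel $K_P$ on the quotient $(M\times M)/G$. Thus the corollary reduces to showing $K_P \in L^2\bigl(\tfrac{M\times M}{G}\bigr)$.

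First I would invoke the Sobolev embedding theorem on $\bar M$ (which has bounded geometry by hypothesis, so that the standard Sobolev inequalities hold uniformly) to conclude $H^\infty(M) \subset C^\infty(M)$. This converts the hypothesis $\mathrm{im}(P) \subset H^\infty(M)$ into $\mathrm{im}(P) \subset C^\infty(M)$, which is precisely the setting of the lemma directly above Lemma~\ref{normkappa}. That lemma then delivers $K_P \in L^2\bigl(\tfrac{M\times M}{G}\bigr)$, using the fact that the orbit-invariance of $\phi(\mathbf{x})=\int_M|K_P(\mathbf{x},\mathbf{y})|^2\,d\mathbf{y}$ together with its continuity and the compactness of $X=M/G$ forces finite integral over the quotient.

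Next I would apply Lemma~\ref{normkappa}, which gives
\[
\mathrm{Tr}_G(P) \;=\; \mathrm{Tr}_G(P^*P) \;=\; \int_{\frac{M\times M}{G}} |K_P(\mathbf{x},\mathbf{y})|^2 \,\frac{d\mathbf{x}\,d\mathbf{y}}{dt} \;<\; \infty,
\]
using the identification $P=P^*P$ one more time. The conclusion follows.

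There is no real obstacle: the hard analytic content has already been packaged into the preceding two lemmas. The only point that deserves a sentence of attention is the observation that $P$ is simultaneously a projection and self-adjoint, so that the $G$-Hilbert–Schmidt square norm of $P$ coincides with its $G$-trace; without this the finiteness of $\mathrm{Tr}_G(P^*P)$ would not a priori yield finiteness of $\mathrm{Tr}_G(P)$ itself.
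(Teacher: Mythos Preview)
Your proposal is correct and matches the paper's intended argument: the corollary is stated without proof precisely because it follows by chaining the two preceding lemmas with Lemma~\ref{normkappa} via the identity $P=P^*P$, exactly as you outline. The only minor remark is that the Sobolev embedding step is implicit in the paper (the first of the preceding lemmas already takes $H^\infty$ as its hypothesis and produces a smooth kernel), but your explicit mention of it is harmless and arguably clearer.
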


\begin{rem}{\rm All the previous results extend trivially to operators acting in bundles.} \end{rem}

\begin{theorem}For $q>0$, the operator $\square$ on $M$ is $G$-Fredholm.\end{theorem}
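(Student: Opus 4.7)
The plan is to use the spectral theorem for the self-adjoint operator $\square$ together with the regularity result in Corollary \ref{bomb2} and the trace bound in Corollary \ref{theworks}. Write $\square=\int_0^\infty \lambda\, dE_\lambda$ and fix any $\delta>0$. Set $P=P_\delta=\int_0^\delta dE_\lambda$, the spectral projection onto the low-energy part, and let $Q=\mathrm{im}({\bf 1}-P)$, which is the closed, $G$-invariant spectral subspace corresponding to $[\delta,\infty)$. Both $P$ and ${\bf 1}-P$ commute with the unitary $G$-action, since $\square$ does.

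First I would verify the two defining conditions of $G$-Fredholmness. By Corollary \ref{bomb2}, $\mathrm{im}(P)\subset H^\infty(M,\Lambda^{p,q})$, so Corollary \ref{theworks} (extended to bundles, as noted in the remark preceding the theorem) gives $\mathrm{Tr}_G(P)<\infty$. Since $\ker\square=\mathrm{im}(E_{\{0\}})\subset \mathrm{im}(P)$, monotonicity of $\dim_G$ yields
\[\dim_G\ker\square \le \mathrm{Tr}_G(P)<\infty,\]
which handles the finite-dimensional kernel condition.

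Next I would show $Q\subset \mathrm{im}(\square)$ with $\mathrm{codim}_G Q<\infty$. On $Q$ the operator $\square$ is, by functional calculus, a positive self-adjoint operator with spectrum contained in $[\delta,\infty)$; in particular it is bounded below by $\delta$ and hence is a bijection $Q\cap\mathrm{Dom}(\square)\to Q$ with bounded inverse. Thus every element of $Q$ has the form $\square u$ for some $u\in Q\cap\mathrm{Dom}(\square)$, so $Q\subset\mathrm{im}(\square)$, and $Q$ is closed and $G$-invariant by construction. The orthogonal complement of $Q$ in $L^2(M,\Lambda^{p,q})$ is precisely $\mathrm{im}(P)$, so
\[\mathrm{codim}_G Q = \dim_G\mathrm{im}(P) = \mathrm{Tr}_G(P) < \infty.\]
Combining the two bullet points gives that $\square$ is $G$-Fredholm.

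The main obstacle is not in this argument itself — once the spectral-projection trick is in place the remaining steps are purely functional-calculus bookkeeping — but rather in the analytic input that underwrites it: namely the global Sobolev regularity of the low-energy spectral projections (Corollary \ref{bomb2}), whose proof is the computational heart of the paper and requires the modified estimates described in the preceding remark. The hypothesis $q>0$ is used precisely there, via the subelliptic basic estimate available for $(p,q)$-forms with $q>0$ on a strongly pseudoconvex domain.
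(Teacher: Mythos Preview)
Your proof is correct and follows essentially the same route as the paper: use the spectral projection $P=\int_0^\delta dE_\lambda$, take $Q=\mathrm{im}({\bf 1}-P)$ as the closed invariant subspace inside $\mathrm{im}(\square)$, and invoke the smoothing property of $P$ together with Corollary~\ref{theworks} to bound both $\dim_G\ker\square$ and $\mathrm{codim}_G Q$. Your citation of Corollary~\ref{bomb2} (global $H^\infty$ regularity) rather than Corollary~\ref{bomb} (local $C^\infty$ regularity) is in fact the more precise reference for the hypothesis of Corollary~\ref{theworks}.
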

\begin{proof} Let $\square =\int_{0}^{\infty}\lambda dE_{\lambda}$ be the spectral decomposition of $\square$ and for $\delta >0$, $P=\int_{0}^{\delta}dE_{\lambda}$.  Thus ${\rm im}(1-P)\subset {\rm im}(\square)$.  Further, ${\rm im}(P)\subset L^{2}(M,\Lambda^{p,q})$ is closed, invariant and, by Corollary \ref{bomb}, ${\rm im}(P)\subset C^{\infty}(M,\Lambda^{p,q})$.  Corollary \ref{theworks} implies that ${\rm codim}_{G}({\rm im}(1-P))<\infty$.  The requirement on the kernel of $\square$ is verified noting that $\ker(\square) \subset {\rm im}(P)$ in the above.\end{proof}

\begin{rem}{\rm By Theorem 5.4.9 of \cite{FK} and the discussion immediately following, one can deduce the same results for the boundary Laplacian $\square_b$.}\end{rem}

\begin{corollary} If $q>0$, ${\rm dim}_G \  L^{2}\bar H^{p,q}(M) < \infty$.                        
\end{corollary}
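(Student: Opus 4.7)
The plan is to extract this as an immediate consequence of the preceding theorem together with the Hodge-type identification from Section 5. By Lemma \ref{decomp}, the reduced $L^2$-Dolbeault cohomology admits the Hilbert $G$-module isomorphism
\[ L^2 \bar H^{p,q}(M) \cong \ker \square_{p,q}, \]
and this isomorphism is $G$-equivariant because $\square$ commutes with the $G$-action (the metric and the complex structure are $G$-invariant, so each term in $\square = \bar\partial \bar\partial^* + \bar\partial^* \bar\partial$ intertwines with $R_s$). Consequently the $G$-dimensions of the two sides agree.

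The key input is the theorem just proven, which asserts that for $q > 0$ the Kohn Laplacian $\square_{p,q}$ on $L^2(M,\Lambda^{p,q})$ is $G$-Fredholm. The first clause in the definition of $G$-Fredholm operator states precisely that $\dim_G \ker \square_{p,q} < \infty$. Combining this with the isomorphism above yields
\[ \dim_G L^2 \bar H^{p,q}(M) = \dim_G \ker \square_{p,q} < \infty, \]
which is the claim.

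There is no real obstacle here; the corollary is essentially the statement of the theorem translated through the Hodge decomposition. The only minor point worth being explicit about is the $G$-equivariance of the isomorphism in Lemma \ref{decomp}, which follows from the $G$-invariance of the metric structures used to set up $\bar\partial^*$ and $\square$. The entire proof should fit in a single short paragraph in the final write-up.
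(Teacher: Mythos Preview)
Your argument is correct and matches the paper's own proof: both use Lemma \ref{decomp} to identify $L^2\bar H^{p,q}(M)$ with $\ker\square_{p,q}$ and then invoke the finite $G$-dimensionality of this kernel established in the preceding theorem. The only cosmetic difference is that the paper phrases the last step as $\ker\square_{p,q}={\rm im}(E_0)$ having finite $G$-dimension, while you cite the first clause of the $G$-Fredholm definition; these are the same fact.
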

\begin{proof} By Lemma \ref{decomp} $L^{2}\bar H^{p,q}(M) = \ker(\square_{p,q}) = {\rm im}( E_{0})$ which has finite $G$-dimension.\end{proof}

\begin{corollary}\label{penult} For the operator $\overline\partial:L^2(M,\Lambda^{0,0})\to L^2(M,\Lambda^{0,1})$ we have that ${\rm im}(\overline\partial)$ is $G$-dense in $\overline{{\rm im}(\overline\partial)}$. Consequently, $\overline\partial:L^2(M,\Lambda^{0,0})$ restricted to $(\ker\bar\partial)^\perp$ is $G$-Fredholm.\end{corollary}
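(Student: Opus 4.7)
The plan is to derive this from the already-proved main theorem (which gives that $\square_{0,1}$ is $G$-Fredholm), the inclusion ${\rm im}(\bar\partial) \subset \overline{{\rm im}(\square)}$ noted just after Lemma \ref{decomp}, and Corollary \ref{fatcut}. The strategy is to first upgrade ${\rm im}(\bar\partial)$ to a $G$-dense subspace of its closure via a Hodge-type identification, and then read off the $G$-Fredholmness of the restricted operator directly from the definition. Note that $\bar\partial$ itself on functions is \emph{not} $G$-Fredholm in general (the space of $L^2$ holomorphic functions can have arbitrary $G$-dimension), so Lemma \ref{fredrest} does not apply directly.

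For the $G$-density claim, I would set $L = \overline{{\rm im}(\bar\partial)} \subset L^2(M,\Lambda^{0,1})$, a closed $G$-invariant submodule. Since $\bar\partial^2 = 0$ holds distributionally, we have $L \subset \ker\bar\partial$. The corollary following Lemma \ref{decomp} gives $L \subset \overline{{\rm im}(\square_{0,1})}$, so Corollary \ref{fatcut} applied with $A = \square_{0,1}$ produces $G$-density of $L \cap {\rm im}(\square_{0,1})$ in $L$. I then claim $L \cap {\rm im}(\square_{0,1}) \subset {\rm im}(\bar\partial)$: given $\alpha = \square\beta \in L$, the inclusion $\alpha \in \ker\bar\partial$ forces $0 = \bar\partial\alpha = \bar\partial\bar\partial^*\bar\partial\beta$, and pairing with $\bar\partial\beta$ yields $\|\bar\partial^*\bar\partial\beta\|^2 = 0$, hence $\alpha = \bar\partial(\bar\partial^*\beta) \in {\rm im}(\bar\partial)$. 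Therefore ${\rm im}(\bar\partial)$ contains a $G$-dense subspace of $L$, and so is itself $G$-dense in $L = \overline{{\rm im}(\bar\partial)}$.

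For the Fredholm conclusion, observe that $\bar\partial|_{(\ker\bar\partial)^\perp}: (\ker\bar\partial)^\perp \to \overline{{\rm im}(\bar\partial)}$ has trivial kernel, so $\dim_G\ker = 0 < \infty$. Its image equals ${\rm im}(\bar\partial)$, which is $G$-dense in the target $\overline{{\rm im}(\bar\partial)}$ by the previous step. Applying the definition of $G$-density at any fixed $\epsilon > 0$ produces a closed $G$-invariant $Q \subset {\rm im}(\bar\partial)$ with ${\rm codim}_G Q < \epsilon$, in particular finite, verifying both conditions of the $G$-Fredholm property. The only point requiring any care is the pairing step in the Hodge identification, where one must check that $\beta \in {\rm Dom}(\square)$ legitimizes writing $\langle \bar\partial\bar\partial^*\bar\partial\beta,\bar\partial\beta\rangle = \|\bar\partial^*\bar\partial\beta\|^2$; this is immediate from the description of ${\rm Dom}(\square)$ in Lemma \ref{dbar}.
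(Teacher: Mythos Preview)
Your proof is correct and follows essentially the same route as the paper: show that ${\rm im}(\square)\cap\overline{{\rm im}(\bar\partial)}$ is $G$-dense in $\overline{{\rm im}(\bar\partial)}$ and then that ${\rm im}(\square)\cap\overline{{\rm im}(\bar\partial)}\subset{\rm im}(\bar\partial)$. The only cosmetic differences are that the paper cites Lemma~\ref{gdens} where you (equivalently, and slightly more directly) invoke Corollary~\ref{fatcut}, and the paper deduces the inclusion from the orthogonal decomposition in Lemma~\ref{decomp} whereas you spell out the pairing argument explicitly (this is the same computation used earlier in Section~4 to reduce \eqref{KL} to \eqref{boh}).
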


\begin{proof}By Lemma \ref{gdens} we have that ${\rm im}(\square)\cap \overline{{\rm im}(\overline\partial)}$ is $G$-dense in $\overline{{\rm im}(\overline\partial)}$.  The decomposition \eqref{decomp} implies that ${\rm im}(\square)\cap \overline{{\rm im}(\overline\partial)}\subset{\rm im}(\overline\partial)$.  Thus ${\rm im}(\overline\partial)$ is almost closed.\end{proof}

\begin{corollary}\label{dbarrest}If $L$ is a closed and invariant subspace of $(\ker\overline\partial_{0,0})^\perp$, then $\overline\partial |_L:L\to\overline{\overline\partial L}$ is $G$-Fredholm.\end{corollary}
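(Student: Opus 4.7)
The plan is to obtain Corollary \ref{dbarrest} as a one-step consequence of the two immediately preceding results, with essentially no additional work. The key observation is that Corollary \ref{penult} produces exactly the $G$-Fredholm operator to which Lemma \ref{fredrest} can be applied, and the hypothesis on $L$ is tailored to the hypothesis of that lemma.

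More precisely, I would proceed as follows. Set $\mathcal H_1 = (\ker\overline\partial_{0,0})^\perp$ and $\mathcal H_2 = \overline{{\rm im}(\overline\partial)}$. Corollary \ref{penult} asserts that the operator $A := \overline\partial|_{\mathcal H_1}: \mathcal H_1 \to \mathcal H_2$ is $G$-Fredholm (the image is $G$-dense in its closure, and the kernel is trivial by construction). Since $\overline\partial$ commutes with the $G$-action, $\ker\overline\partial_{0,0}$ and its orthogonal complement $\mathcal H_1$ are closed $G$-invariant subspaces of $L^2(M,\Lambda^{0,0})$, so $\mathcal H_1$ is itself a Hilbert $G$-module. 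The hypothesis of Corollary \ref{dbarrest} is precisely that $L$ is a closed $G$-invariant subspace of $\mathcal H_1$.

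Now apply Lemma \ref{fredrest} to $A: \mathcal H_1 \to \mathcal H_2$ and the submodule $L \hookrightarrow \mathcal H_1$. The lemma yields that $A|_L : L \to \overline{A(L)}$ is $G$-Fredholm, and $A|_L = \overline\partial|_L$ with $\overline{A(L)} = \overline{\overline\partial L}$, which is the conclusion. There is no real obstacle: the substance of the argument (the almost-closedness of ${\rm im}(\overline\partial)$, and the fact that almost-closedness is inherited by intersection with closed invariant subspaces) has already been carried out in the proof of Lemma \ref{fredrest} via Corollary \ref{fatcut} and Lemma \ref{gdens}, and in the proof of Corollary \ref{penult} via the decomposition \eqref{decomp} combined with the $G$-Fredholm property of $\square$.
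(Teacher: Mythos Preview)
Your proof is correct and follows exactly the paper's approach: the paper's proof is the single sentence ``Apply Lemma \ref{fredrest} to Corollary \ref{penult},'' and your writeup simply unpacks that application.
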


\begin{proof} Apply Lemma \ref{fredrest} to Corollary \ref{penult}. \end{proof}

\begin{corollary} For any closed, invariant $L\subset L^2(M,\Lambda^{0,0})$, we have that $\bar\partial L$ is almost closed.
\end{corollary}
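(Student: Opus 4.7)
The plan is to reduce to Corollary~\ref{dbarrest} via orthogonal projection onto $(\ker\overline\partial)^\perp$. Let $\pi:L^2(M,\Lambda^{0,0})\to (\ker\overline\partial)^\perp$ denote the $G$-equivariant orthogonal projection, and set $L_0:=\overline{\pi(L)}$, which is a closed $G$-invariant subspace of $(\ker\overline\partial)^\perp$. The basic identity $\overline\partial L=\overline\partial\pi L$ holds because $(1-\pi)v\in\ker\overline\partial$ for every $v\in L\cap\mathrm{Dom}(\overline\partial)$, so in particular $\overline\partial L\subset \overline\partial L_0$ and therefore $\overline{\overline\partial L}\subset\overline{\overline\partial L_0}$.

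By Corollary~\ref{dbarrest} applied to $L_0$, the restriction $\overline\partial|_{L_0}:L_0\to\overline{\overline\partial L_0}$ is $G$-Fredholm, and Lemma~\ref{ac} then gives that $\overline\partial L_0$ is almost closed in $\overline{\overline\partial L_0}$. Since $\overline{\overline\partial L}$ is a closed $G$-invariant subspace of $\overline{\overline\partial L_0}$, applying Lemma~\ref{gdens} with $Q=\overline\partial L_0$ and $L_1=\overline{\overline\partial L}$ shows that $\overline\partial L_0\cap\overline{\overline\partial L}$ is almost closed with closure $\overline{\overline\partial L}$.

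It then suffices to establish the inclusion $\overline\partial L_0\cap\overline{\overline\partial L}\subseteq\overline\partial L$: together with the above this exhibits an almost-closed, $G$-dense subspace inside $\overline\partial L$, forcing $\overline\partial L$ itself to be almost closed. This inclusion is the main obstacle. Concretely, for $u\in L_0\cap\mathrm{Dom}(\overline\partial)$ with $\overline\partial u\in\overline{\overline\partial L}$, one must produce $v\in L\cap\mathrm{Dom}(\overline\partial)$ satisfying $\overline\partial v=\overline\partial u$, equivalently show that $u\in L+\ker\overline\partial$. My plan is to exploit the injective $G$-Fredholm structure of $\overline\partial|_{L_0}$: the $G$-Fredholm property furnishes a closed $G$-invariant subspace $Q\subset\overline\partial L_0$ of finite $G$-codimension in $\overline{\overline\partial L_0}$ on which $(\overline\partial|_{L_0})^{-1}$ is bounded via the closed graph theorem applied to $\overline\partial|_{L_0}^{-1}(Q)\to Q$. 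On the still $G$-dense piece $Q\cap\overline{\overline\partial L}$ of $\overline{\overline\partial L}$, the approximation $\overline\partial v_n\to\overline\partial u$ with $v_n\in L$ promotes to $L^2$-convergence $\pi v_n\to u$, forcing $u$ into $\overline{\pi L}$ through a closed $G$-invariant subspace and yielding the required containment up to finite $G$-codimensional slack, which is sufficient for almost-closedness of $\overline\partial L$.
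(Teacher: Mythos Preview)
Your reduction via $L_0=\overline{\pi(L)}$ is sound through the second paragraph: $\overline\partial L=\overline\partial\,\pi(L)\subset\overline\partial L_0$, the restriction $\overline\partial|_{L_0}$ is $G$-Fredholm by Corollary~\ref{dbarrest}, and Lemma~\ref{gdens} then gives that $\overline\partial L_0\cap\overline{\overline\partial L}$ is $G$-dense in $\overline{\overline\partial L}$. The gap is entirely in the third paragraph, where you attempt the inclusion $\overline\partial L_0\cap\overline{\overline\partial L}\subset\overline\partial L$. Unpacking your sketch: for $w=\overline\partial u\in Q\cap\overline{\overline\partial L}$ you pick $v_n\in L$ with $\overline\partial v_n\to w$ and claim $\pi v_n\to u$ via the bounded inverse of $\overline\partial|_{L_0}$ on $Q$. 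But $\overline\partial v_n$ need not lie in $Q$, so that inverse does not apply to them; and even if you could force convergence $\pi v_n\to u$, this only yields $u\in\overline{\pi(L)}=L_0$, which you already knew. What you actually need is $u\in\pi(L)$, and nothing in your argument produces an element of $L$ (rather than of its closure after projection) whose $\overline\partial$ equals $w$. Since $\overline\partial$ is not assumed to have closed range on $(\ker\overline\partial)^\perp$, there is no general mechanism forcing $u$ back into $\pi(L)$, and the phrase ``up to finite $G$-codimensional slack'' does not name a concrete closed $G$-invariant subspace of $\overline\partial L$.

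The paper sidesteps exactly this obstacle by intersecting instead of projecting: it takes $L''=L\cap(\ker\overline\partial)^\perp$, which is closed, $G$-invariant, and contained in $(\ker\overline\partial)^\perp$, so Corollary~\ref{dbarrest} still applies and $\overline\partial L''$ is almost closed. Because $L''\subset L$, one has $\overline\partial L''\subset\overline\partial L$ for free, so every closed $G$-invariant $Q_\epsilon\subset\overline\partial L''$ already sits inside $\overline\partial L$---precisely the containment you were unable to manufacture for $L_0$. Your choice $L_0=\overline{\pi(L)}$ enlarges $\pi(L)$ and thereby creates the inclusion problem; replacing it by $L\cap(\ker\overline\partial)^\perp$ removes it.
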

\begin{proof}Consider $L\cap (\ker\overline\partial_{0,0})^\perp$. Then $\bar\partial (L\cap (\ker\overline\partial_{0,0})^\perp)$ is $G$-dense in $\overline{\bar\partial L}$.\end{proof}

\section{Appendix}

Here we derive an {\it a priori} estimate for the Laplacian $\square$ by modifying some lemmata from \cite{FK}. To that end, we repeat some of their definitions.

\begin{definition}Denote by $\mathcal D^{p,q}$ the domain of the formal adjoint $\vartheta$ of $\bar\partial$ in $C_c^\infty(\bar M,\Lambda^{p,q})$. \end{definition}

\begin{definition} A special boundary chart $U$ is a chart intersecting $bM$ having the following properties: 
\begin{enumerate}
\item With $\rho$ the function defining $bM$, the functions $t_1,\dots,t_{2n-1},\rho$ form a coordinate system on $U$.
\item The coordinates $\{t_1,\dots,t_{2n-1}\}_{\rho=0}$ form a coordinate system on $bM\cap U$.
\item Having chosen a Riemannian structure in the cotangent bundle, we choose a local orthonormal basis $\omega_1,\dots,\omega_n$ for $\Lambda^{1,0}(\bar M)$ such that $\omega_n=\sqrt{2}\ \partial\rho$ on $U$.
\end{enumerate}\end{definition}

\noindent
With the tangential Fourier transform in a special boundary chart

\[\tilde u(\tau,\rho)=\frac{1}{(2\pi)^{(2n-1)/2}}\int_{\mathbb R^{2n-1}}e^{-i\langle t,\tau\rangle}u(t,\rho)dt,\]

\noindent
define for $s\in\mathbb R$, the operators 

\[\Lambda_{\bf t}^s u(t,\rho)=\frac{1}{(2\pi)^{(2n-1)/2}}\int_{\mathbb R^{2n-1}} e^{i\langle t,\tau\rangle}(1+|\tau|^2)^{s/2}\tilde u(\tau,\rho)d\rho d\tau\]

\noindent
(${\bf t}$ means tangential) and define the tangential Sobolev norms by

\[|||u|||_s^2 = \int_{\mathbb R^{2n-1}}\int_{-\infty}^0 (1+|\tau|^2)^s|\tilde u(\tau,\rho)|^2d\rho d\tau.\]

\noindent
With $D^j=D^j_{\bf t}=\frac{1}{i}\frac{\partial}{\partial t_j}$ for $j=1,\dots,2n-1$ the derivatives in tangential directions and $D^{2n} = D_\rho$, define the norms

\begin{equation}\label{tangnorms}|||Du|||_s^2 = \sum_1^{2n} |||D^j u|||_s^2 + |||u|||_s^2 \approx  |||u|||_{s+1}^2 + |||D_\rho u|||_s^2.\end{equation}

\noindent
In order to state the basic estimate, we need the quantity

\[E(u)^2 = \sum_{jk}\|\partial_{\bar z_k}u_j\|^2 + \int_{bM}|u|^2 +\|u\|^2.\] 

\noindent
\begin{definition}That the basic estimate is satisfied means that there exists a $C>0$ such that $E(u)^2\le C Q(u,u)$ uniformly for $u$ in $\mathcal D^{0,1}$. We will abbreviate this and similar estimates 

\[E(u)^2\lesssim Q(u,u)\qquad (u\in\mathcal D^{0,1}).\]
\end{definition}
\noindent
If $M$ is strongly pseudoconvex, then the basic estimate holds in $\mathcal D^{0,1}$ (Prop 2.1.4, \cite{FK}) and in fact in all $\mathcal D^{p,q}$ for which $q>0$ (Corollary 3.2.12, \cite{FK}).}\end{rem}

\noindent
We will systematically label sequences of real-valued, cutoff functions $(\zeta_k)_k\subset C^\infty_c(M)$ such that $\zeta_{k}|_{{\rm supp}(\zeta_{k+1})}=1$ for $k=0,1,2,\dots$.
\begin{lemma}\label{cutFolland} Let $U$ be a special boundary chart and let $\zeta,\zeta_0,\zeta_1$ be real-valued functions in $C^\infty_c(U)$ with $\zeta_1=1$ on ${\rm supp}(\zeta)$ and $\zeta_0=1$ on ${\rm supp}(\zeta_1)$. Then for $A=\zeta_1\Lambda_{\bf t}^k\zeta$ and for $A'$ the formal adjoint of $A$ with respect to the inner product on $L^2(M)$,

\[ Q(A u,A u)-\mathfrak{Re}\ Q( u,\zeta_0 A'A u) = \mathcal O(|||D\zeta_0 u|||_{k-1}^2)\]
\[Q(\zeta u,\zeta u)-\mathfrak{Re}\ Q( u,\zeta_0\zeta^2 u) = \mathcal O(\|\zeta_0 u\|^2),\]
\noindent
uniformly for $ u\in\mathcal D^{p,q}\cap\Lambda_0^{p,q}(U\cap\bar M)$.\end{lemma}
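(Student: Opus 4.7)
The proof proceeds by expanding both sides and identifying cancellations. Two preliminary facts drive the computation. First, since $A = \zeta_1\Lambda_{\bf t}^k\zeta$ is a purely tangential operator---$\Lambda_{\bf t}^k$ commutes with $\partial/\partial\rho$ and is self-adjoint in the tangential Fourier variable---its formal adjoint is $A' = \zeta\Lambda_{\bf t}^k\zeta_1$, and $\langle A'v,w\rangle = \langle v,Aw\rangle$ holds with no boundary contribution. Second, $A'Au$ is supported in $\operatorname{supp}(\zeta)\subset\{\zeta_0=1\}$, so $\zeta_0 A'Au = A'Au$ and $\mathfrak{Re}\,Q(u,\zeta_0 A'Au) = \mathfrak{Re}\,Q(u,A'Au)$.

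Writing $\bar\partial Au = A\bar\partial u + [\bar\partial,A]u$ and $\vartheta Au = A\vartheta u + [\vartheta,A]u$, expansion yields
\begin{multline*}
Q(Au,Au) = \|A\bar\partial u\|^2 + \|A\vartheta u\|^2 + \|Au\|^2 + \|[\bar\partial,A]u\|^2 + \|[\vartheta,A]u\|^2 \\
+ 2\mathfrak{Re}\langle A\bar\partial u,[\bar\partial,A]u\rangle + 2\mathfrak{Re}\langle A\vartheta u,[\vartheta,A]u\rangle.
\end{multline*}
Using the Leibniz identity $\bar\partial A'A = A'A\bar\partial + [\bar\partial,A']A + A'[\bar\partial,A]$ together with $\langle \bar\partial u,A'[\bar\partial,A]u\rangle = \langle A\bar\partial u,[\bar\partial,A]u\rangle$,
\[
\mathfrak{Re}\langle \bar\partial u,\bar\partial A'Au\rangle = \|A\bar\partial u\|^2 + \mathfrak{Re}\langle \bar\partial u,[\bar\partial,A']Au\rangle + \mathfrak{Re}\langle A\bar\partial u,[\bar\partial,A]u\rangle,
\]
analogously for $\vartheta$, and $\langle u,A'Au\rangle = \|Au\|^2$. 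Subtracting cancels the quadratic terms $\|A\bar\partial u\|^2$, $\|A\vartheta u\|^2$, $\|Au\|^2$ and absorbs one copy of each cross term, leaving
\[
\|[\bar\partial,A]u\|^2 + \|[\vartheta,A]u\|^2 + \mathfrak{Re}\langle \bar\partial u,B u\rangle + \mathfrak{Re}\langle \vartheta u,B' u\rangle,
\]
where $B = A'[\bar\partial,A] - [\bar\partial,A']A$ and $B' = A'[\vartheta,A] - [\vartheta,A']A$.

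The key structural observation is that $[\bar\partial,A]u = \bar\partial\zeta_1\wedge\Lambda_{\bf t}^k(\zeta u) + \zeta_1\Lambda_{\bf t}^k(\bar\partial\zeta\wedge u)$ is a tangential pseudodifferential operator of order $k$ whose coefficients are supported in $\operatorname{supp}(\zeta_0)$, and likewise for $[\vartheta,A]$, $[\bar\partial,A']$, $[\vartheta,A']$. By tangential Sobolev continuity of $\Lambda_{\bf t}^k$, $\|[\bar\partial,A]u\|,\|[\vartheta,A]u\| \lesssim |||\zeta_0 u|||_k$. The leading symbols of $A'[\bar\partial,A]$ and $[\bar\partial,A']A$ coincide (both reduce to $\bar\partial(\zeta\zeta_1)\cdot(\zeta\zeta_1)\cdot\Lambda_{\bf t}^{2k}$ to top order), so $B$ is in fact of tangential order $\le 2k-1$ with symbol supported in $\operatorname{supp}(\zeta_0)$; after absorbing one tangential derivative back onto $\bar\partial u$ via self-adjointness of $\Lambda_{\bf t}^k$ and using the $\bar\partial$-Neumann conditions on $u$ to discard boundary integrals from any $\bar\partial\leftrightarrow\vartheta$ integration by parts, one obtains $|\langle\bar\partial u,Bu\rangle|\lesssim|||\zeta_0 u|||_k^2$, and likewise for $B'$. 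Finally, \eqref{tangnorms} yields $|||\zeta_0 u|||_k^2 \le |||D\zeta_0 u|||_{k-1}^2$, completing the first identity.

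The second identity is the $k=0$ degenerate case with $A = \zeta$: then $A'A = \zeta^2$, $\zeta_0\zeta^2 = \zeta^2$, and the commutators $[\bar\partial,\zeta],[\vartheta,\zeta]$ reduce to multiplication by the bounded forms $\bar\partial\zeta$ and $-\sigma(\vartheta,d\zeta)$, both supported in $\operatorname{supp}(\zeta_0)$. Because $\zeta$ is a scalar multiplication operator, $B = \zeta[\bar\partial,\zeta] - [\bar\partial,\zeta]\zeta = 0$ and similarly $B'=0$, so the cross terms vanish identically and only $\|[\bar\partial,\zeta]u\|^2 + \|[\vartheta,\zeta]u\|^2 = \mathcal O(\|\zeta_0 u\|^2)$ remains. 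The principal obstacle in the general case is precisely the pair of cross terms $\mathfrak{Re}\langle\bar\partial u,Bu\rangle$ and $\mathfrak{Re}\langle\vartheta u,B'u\rangle$: naively these involve tangential derivatives of $u$ of order $2k+1$, and the reduction to $|||D\zeta_0 u|||_{k-1}^2$ rests on the leading-symbol cancellation in $B, B'$ combined with the $\bar\partial$-Neumann conditions to eliminate the residual boundary integrals.
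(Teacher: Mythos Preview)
Your argument is essentially correct and recapitulates the content of Lemmata 2.4.2 and 2.4.3 in Folland--Kohn, which is exactly what the paper invokes: its proof is the one-line observation that multiplication by $\zeta_0$ preserves $\mathcal D^{p,q}$ (FK 2.3.2), so one may apply FK 2.4.2 and 2.4.3 to $\zeta_0 u$ in place of $u$; since $A\zeta_0 u = Au$, $A'A\zeta_0 u = A'Au$, and $Q(\zeta_0 u,A'Au)=Q(u,\zeta_0 A'Au)$ by the support relations, the FK estimates become the localized ones stated here. You have reproduced that computation rather than cited it, which is fine.

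Two small inaccuracies are worth flagging, though neither breaks the argument. First, in a special boundary chart the $L^2(M)$ inner product carries the Riemannian volume density, so $\Lambda_{\bf t}^k$ is not exactly self-adjoint for that pairing; the formal adjoint is $g^{-1/2}\Lambda_{\bf t}^k g^{1/2}$, and hence $A' = \zeta\Lambda_{\bf t}^k\zeta_1$ only modulo a tangential operator of order $k-1$. Second, your formula $[\bar\partial,A]u = \bar\partial\zeta_1\wedge\Lambda_{\bf t}^k(\zeta u) + \zeta_1\Lambda_{\bf t}^k(\bar\partial\zeta\wedge u)$ omits the term $\zeta_1[\bar\partial,\Lambda_{\bf t}^k](\zeta u)$, which is nonzero because the coefficients of $\bar\partial$ are variable. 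This missing piece carries a normal derivative composed with a tangential operator of order $k-1$, so the correct bound is $\|[\bar\partial,A]u\|\lesssim |||D\zeta_0 u|||_{k-1}$ rather than $|||\zeta_0 u|||_k$; the conclusion is unaffected since the latter is dominated by the former via \eqref{tangnorms}. With these corrections your symbol-cancellation argument for $B$ and the Cauchy--Schwarz estimate for $\langle\bar\partial u,Bu\rangle$ go through as written.
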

\begin{proof}These are simple consequences of the fact that the domain $\mathcal D^{p,q}$ of $\vartheta$ is preserved under the application of a cutoff function ({\it cf.} 2.3.2 of \cite{FK}) and lemmata 2.4.2 and 2.4.3 of \cite{FK} applied to $\zeta_0 u$. \end{proof}

\noindent
\begin{rem}{\rm If we assume further that $u\in {\rm Dom}(F)$, ({\it cf.} \cite{FK}, Prop. 1.3.5) we may write
\[ Q(A u,A u)-\mathfrak{Re}\ \langle\zeta_0F u,A'A u\rangle = \mathcal O(|||D\zeta_0 u|||_{k-1}^2)\]
\[ Q(\zeta u,\zeta u)-\mathfrak{Re}\ \langle\zeta_0F u,\zeta^2 u\rangle = \mathcal O(\|\zeta_0 u\|^2).\]

\noindent
It is in this localized form that 2.4.2, 2.4.3 of \cite{FK} will be useful in our Lemma \ref{biglem}, a substantial modification of Lemma 2.4.6 of \cite{FK}. We will need the following theorem (2.4.4 from \cite{FK}) unchanged. }\end{rem}

\begin{lemma}\label{2.4.4}For every $p\in bM$ there is a (small) special boundary chart $V$ containing $p$ such that $|||Du|||_{-1/2}^2\lesssim E(u)^2$ uniformly for $u\in\Lambda_0^{p,q}(V\cap\bar M)$.\end{lemma}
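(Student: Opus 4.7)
The plan is to split $Du$ into tangential and normal parts via the equivalence (\ref{tangnorms}) and estimate each. That is, it suffices to prove $|||u|||_{1/2}^2\lesssim E(u)^2$ and $|||D_\rho u|||_{-1/2}^2\lesssim E(u)^2$ uniformly in $u\in\Lambda_0^{p,q}(V\cap\bar M)$, for a sufficiently small special boundary chart $V$ around $p$ with coordinates $(t_1,\ldots,t_{2n-1},\rho)$ and local orthonormal coframe $\omega_1,\ldots,\omega_n$ with $\omega_n=\sqrt 2\,\partial\rho$.

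First I would dispose of the normal derivative. Because $\omega_n=\sqrt 2\,\partial\rho$, the dual antiholomorphic vector field $\partial_{\bar z_n}$ has a nontrivial $\partial_\rho$-component, so algebraically one can write $D_\rho=c\,\partial_{\bar z_n}+S$ for some constant $c$ and a first-order purely tangential operator $S$ with smooth coefficients. Applying this componentwise to $u$ gives
\[|||D_\rho u|||_{-1/2}^2\lesssim \|\partial_{\bar z_n}u\|^2+|||Su|||_{-1/2}^2\lesssim E(u)^2+|||u|||_{1/2}^2,\]
so the normal bound reduces to the tangential one.

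The core step is the tangential $1/2$-gain $|||u|||_{1/2}^2\lesssim E(u)^2$. Working componentwise with the tangential Fourier transform, I would decompose each tangential first-order operator $D^j$ ($1\le j\le 2n-1$) into its $(1,0)$ and $(0,1)$ parts relative to the coframe. The $(0,1)$-pieces produce $\partial_{\bar z_k}u$ terms whose $L^2$ norms are directly controlled by $E(u)^2$. For the $(1,0)$-pieces $\partial_{z_k}$, I would integrate by parts against the tangential weight $\Lambda_{\bf t}^{-1}$, using $\partial_{z_k}^{*}=-\partial_{\bar z_k}+r_k$ with $r_k$ of order zero; this transfers the derivative onto $\partial_{\bar z_k}$, produces a boundary term dominated by $\int_{bM}|u|^2$, and leaves a bulk commutator error dominated by $\|u\|^2$. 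These boundary and bulk errors are exactly the remaining summands in $E(u)^2$.

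The main obstacle is the bookkeeping in that tangential integration by parts: one has to commute $\Lambda_{\bf t}^{-1}$ past the complex vector fields, absorb the lower-order tangential symbols into $|||u|||_{1/2}^2$ (or $\|u\|^2$) via a small-parameter/Cauchy-Schwarz argument, and verify that the boundary contribution has the correct sign and form to be absorbed into $\int_{bM}|u|^2$. This is the Morrey-Kohn-type computation underlying the $1/2$-subelliptic estimate in the strongly pseudoconvex case, and it is exactly what forces $V$ to be chosen small enough that the structure coefficients of the coframe and the metric are nearly constant, so that the absorption is legitimate. Once the tangential estimate is in hand, combining it with the normal reduction above yields $|||Du|||_{-1/2}^2\lesssim E(u)^2$ on $V$.
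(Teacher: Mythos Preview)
The paper does not give its own proof of this lemma; it simply quotes it verbatim as Theorem 2.4.4 of Folland--Kohn \cite{FK} (``We will need the following theorem (2.4.4 from \cite{FK}) unchanged''). Your sketch is essentially the outline of the Folland--Kohn argument: reduce the normal derivative to $\bar L_n$ plus a tangential remainder, then obtain the tangential half-derivative gain by writing each $D^j_{\bf t}$ in terms of the complex frame, controlling the $\bar L_k$ pieces directly from $E(u)$, and handling the $L_k$ pieces by integration by parts with the resulting boundary term absorbed into $\int_{bM}|u|^2$.

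One point to be careful about when you carry this out: your decomposition of a real tangential vector field $D^j_{\bf t}$ into $(1,0)$ and $(0,1)$ parts will in general contain $L_n$ and $\bar L_n$ contributions (with purely imaginary coefficient sum, so that the normal components cancel). The $\bar L_n$ piece is fine, but the $L_n$ piece is \emph{not} tangential to $bM$, so the integration by parts there genuinely produces a boundary term; this is exactly where $\int_{bM}|u|^2$ enters, and in Folland--Kohn it is organized by isolating the real tangential field $T = L_n - \bar L_n$ and estimating $|||Tu|||_{-1/2}$ separately. Your proposal acknowledges the boundary term, so this is not a gap, just a place where the bookkeeping is slightly more delicate than the $(1,0)$/$(0,1)$ language suggests. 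With that caveat, your approach is correct and matches the cited proof.
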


\noindent
The following is our local replacement of Lemma 2.4.6 of \cite{FK}.

\noindent
\begin{lemma}\label{biglem} Suppose the basic estimate holds in $\mathcal D^{p,q}$. Let $V$ be a special boundary chart in which the conclusions of Lemma \ref{2.4.4} hold, and let $\{\zeta_k\}_0^\infty$ be a sequence of real functions in $\Lambda^{0,0}_0(V\cap \bar M)$ such that $\zeta_k=1$ on ${\rm supp}\ \zeta_{k+1}$.  Then for each positive integer $k$, 
\begin{equation}\label{aag}|||D\zeta_k u|||^2_{(k-2)/2}\lesssim \|\zeta_0F u\|^2_{(k-2)/2}+\|\zeta_0 u\|^2.\end{equation}
\noindent
uniformly for $ u\in {\rm Dom}(F)\cap\mathcal D^{p,q}$.\end{lemma}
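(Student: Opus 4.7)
The plan is induction on $k\ge 1$, mimicking the strategy of Lemma 2.4.6 of \cite{FK} but with an extra layer of cutoffs at each stage so that the argument stays inside the chart $V$ and never requires a global partition of unity. The three tools used at every step are: Lemma \ref{2.4.4} (tangential control by $E(\cdot)$), the basic estimate (which upgrades $E(\cdot)^2$ to $Q(\cdot,\cdot)$), and Lemma \ref{cutFolland} in its remark form, which for $u\in {\rm Dom}(F)$ replaces $Q(u,\cdot)$ by $\langle Fu,\cdot\rangle$.

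For the base case $k=1$, one notes that $\zeta_1 u\in \mathcal D^{p,q}\cap \Lambda_0^{p,q}(V\cap\bar M)$ because cutoffs preserve both compact support and the $\bar\partial$-Neumann boundary condition. Chaining Lemma \ref{2.4.4}, the basic estimate, and Lemma \ref{cutFolland} gives
\[|||D\zeta_1 u|||_{-1/2}^2 \lesssim E(\zeta_1 u)^2 \lesssim Q(\zeta_1 u,\zeta_1 u)=\mathfrak{Re}\,\langle \zeta_0 Fu,\zeta_1^2 u\rangle + \mathcal O(\|\zeta_0 u\|^2).\]
The inner product is bounded by the $H^{-1/2}$--$H^{1/2}$ duality; since $\zeta_1^2 u$ is compactly supported in $V\cap\bar M$, its $H^{1/2}$ norm is dominated by $|||D\zeta_1 u|||_{-1/2}$ plus $\mathcal O(\|\zeta_0 u\|)$ from cutoff derivatives, and a Cauchy--Schwarz with small parameter absorbs the latter into the left-hand side.

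For the inductive step, assuming the estimate at order $k-1$, the same chain of ingredients is applied to $Au$, where $A=\zeta_{k+1}\Lambda_{\bf t}^{(k-1)/2}\zeta_k$ is a cut-off tangential pseudodifferential operator. The flanking cutoffs keep $Au$ compactly supported in $V\cap\bar M$, and since $\Lambda_{\bf t}^{(k-1)/2}$ differentiates only in tangential directions it preserves $\mathcal D^{p,q}$. Lemma \ref{2.4.4} and the basic estimate give $|||DAu|||_{-1/2}^2\lesssim Q(Au,Au)$, and Lemma \ref{cutFolland} (remark form) then produces
\[|||D\zeta_k u|||_{(k-2)/2}^2 \lesssim \mathfrak{Re}\,\langle \zeta_0 Fu,A'Au\rangle + \mathcal O\!\left(|||D\zeta_{k-1} u|||_{(k-3)/2}^2\right)\]
after identifying $|||DAu|||_{-1/2}$ with $|||D\zeta_k u|||_{(k-2)/2}$ modulo commutators. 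The pairing is estimated in the $H^{(k-2)/2}$--$H^{-(k-2)/2}$ duality,
\[|\langle \zeta_0 Fu,A'Au\rangle|\le \epsilon\,|||D\zeta_k u|||_{(k-2)/2}^2 + C_\epsilon\|\zeta_0 Fu\|_{(k-2)/2}^2,\]
whose first term is absorbed, while the error term and all commutators are of strictly lower tangential order and are supported in $\{\zeta_{k-1}=1\}$, hence fall under the inductive hypothesis (after a harmless relabeling of the nested sequence $\zeta_j$).

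The main obstacle is the bookkeeping of commutators between $\Lambda_{\bf t}^{(k-1)/2}$ and the cutoffs $\zeta_k$, $\zeta_{k+1}$, and in the expansion of $A'A$: each such remainder is a tangential pseudodifferential operator of order one lower than the principal term, supported where an outer cutoff is identically one. This is precisely why the nested sequence $\zeta_0\supset\zeta_1\supset\zeta_2\supset\cdots$ with $\zeta_j=1$ on ${\rm supp}\,\zeta_{j+1}$ is set up so that a commutator ``eats'' one level of cutoff while lowering the tangential order by one; with this design every error term is controlled by either a previous step of the induction or by the two data terms $\|\zeta_0 Fu\|_{(k-2)/2}^2+\|\zeta_0 u\|^2$, which completes the induction.
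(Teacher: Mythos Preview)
Your base case $k=1$ is correct and matches the paper's argument. The inductive step has a concrete error in the choice of $A$: you take $A=\zeta_{k+1}\Lambda_{\bf t}^{(k-1)/2}\zeta_k$, but under the nesting convention $\zeta_j=1$ on ${\rm supp}\,\zeta_{j+1}$ the outer cutoff $\zeta_{k+1}$ has \emph{smaller} support than the inner one $\zeta_k$. Lemma~\ref{cutFolland} (and the whole commutator calculus behind it) requires the outer cutoff to be identically $1$ on the support of the inner cutoff; the paper accordingly uses $A=\zeta_1\Lambda_{\bf t}^{(k-1)/2}\zeta_k$.

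With your $A$ the sentence ``identifying $|||DAu|||_{-1/2}$ with $|||D\zeta_k u|||_{(k-2)/2}$ modulo commutators'' breaks down. Since $|||D\zeta_k u|||_{(k-2)/2}=|||D\Lambda\zeta_k u|||_{-1/2}$, the discrepancy between $\Lambda\zeta_k u$ and $\zeta_{k+1}\Lambda\zeta_k u$ is $(1-\zeta_{k+1})\Lambda\zeta_k u$; but $1-\zeta_{k+1}$ does \emph{not} vanish on ${\rm supp}\,\zeta_k$, so $(1-\zeta_{k+1})\Lambda\zeta_k$ is not a smoothing operator by pseudolocality, and this remainder is of the same order as the term you are trying to estimate, not a lower-order error. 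By contrast, with the paper's choice $(1-\zeta_1)\Lambda\zeta_k$ \emph{is} smoothing because ${\rm supp}(1-\zeta_1)\cap{\rm supp}\,\zeta_k=\emptyset$, which is exactly what makes \eqref{commpush1} go through. Replacing your $\zeta_{k+1}$ by any $\zeta_j$ with $j<k$ (the paper takes $j=1$) repairs the argument, and the rest of your outline then coincides with the paper's proof.
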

\begin{proof}Assuming the basic estimate, using Lemma \ref{2.4.4}, and noting that multiplication by $\zeta_1$ preserves $\mathcal D^{p,q}$, we have
\[|||D\zeta_1 u|||_{-1/2}^2\lesssim Q(\zeta_1 u,\zeta_1 u),\qquad  u\in\mathcal D^{p,q}\cap \Lambda_0^{p,q}(V\cap\bar M).\]


\noindent
If we insert a real-valued cutoff function $\zeta_0$ equal 1 on the support of $\zeta_1$ and apply Lemma \eqref{cutFolland}, to the form $\zeta_0 u$ we have 
\[|||D\zeta_1 u|||_{-1/2}^2\lesssim \mathfrak{Re}\ Q( u,\zeta_0\zeta_1^2 u)+\mathcal O(\|\zeta_0 u\|^2).\]
%
\[ =  \mathfrak{Re}\  \langle F u,\zeta_1^2 u\rangle+\mathcal O(\|\zeta_0 u\|^2) =  \mathfrak{Re}\  \langle\zeta_1 F u,\zeta_1 u\rangle+\mathcal O(\|\zeta_0 u\|^2).\]

\noindent
Now, by the generalized Schwartz inequality, we have
\[|||D\zeta_1 u|||_{-1/2}^2\lesssim  \mathfrak{Re}\  \langle \zeta_1F u,\zeta_1 u\rangle+\mathcal O(\|\zeta_0 u\|^2)\lesssim \|\zeta_1F u\|_{-1/2}\|\zeta_1 u\|_{1/2}+\mathcal O(\|\zeta_0 u\|^2).\]

\noindent
But for any $c>0$ there exists a $C>0$ sufficiently large so that
\[|||D\zeta_1 u|||_{-1/2}^2\lesssim C\|\zeta_1F u\|_{-1/2}^2+c\|\zeta_1 u\|_{1/2}^2+\mathcal O(\|\zeta_0 u\|^2).\]

\noindent
By the equivalence in \eqref{tangnorms}, $\|\zeta_1 u\|_{1/2}\le |||D\zeta_1 u|||_{-1/2}$, so
\[|||D\zeta_1 u|||_{-1/2}^2\lesssim \|\zeta_0F u\|_{-1/2}^2+\|\zeta_0 u\|^2,\]
\noindent
and we have shown that the lemma is true for $k=1$. Assume the lemma true for $k-1$ {\it i.e.} 
\begin{equation}\label{indhyp}|||D\zeta_{k-1} u |||^2_{(k-3)/2}\lesssim \|\zeta_0 F u \|^2_{(k-3)/2} +\|\zeta_0 u\|^2. \end{equation}
\noindent
We follow the proof of \cite{FK} 2.4.6, citing intermediate results. Abbreviating $\Lambda_{\bf t}^{(k-1)/2}=\Lambda$ and $A=\zeta_1\Lambda\zeta_k$,
\begin{equation}\label{commpush1}|||D\zeta_k u|||^2_{(k-2)/2}\lesssim |||D\zeta_1\Lambda\zeta_k u|||^2_{-1/2}+|||D\zeta_{k-1} u|||^2_{(k-3)/2}\end{equation}
\begin{equation}\label{commpush2}|||DAu|||^2_{-1/2}\lesssim C|||\zeta_1 F u|||^2_{(k-2)/2}+c|||D\zeta_{k} u|||^2_{(k-2)/2}+|||D\zeta_{k-1} u|||^2_{(k-3)/2}.\end{equation}
\noindent
Substituting \eqref{commpush2} into \eqref{commpush1} gives
\[|||D\zeta_k u|||^2_{(k-2)/2}\lesssim |||\zeta_1F u|||^2_{(k-2)/2}+|||D\zeta_{k-1} u|||^2_{(k-3)/2}.\]
\noindent
Using the inductive hypothesis \eqref{indhyp} yields
\[|||D\zeta_k u|||^2_{(k-2)/2}\lesssim |||\zeta_1F u|||^2_{(k-2)/2}+ \|\zeta_0F u\|_{(k-3)/2}^2+\|\zeta_0 u\|^2.\]
\noindent
Because of the support properties of the $\zeta_k$,
\[|||D\zeta_k u|||^2_{(k-2)/2}\lesssim |||\zeta_0F u|||^2_{(k-2)/2}+ \|\zeta_0F u\|_{(k-3)/2}^2+\|\zeta_0 u\|^2.\]
\noindent
This implies
\[|||D\zeta_k u|||^2_{(k-2)/2}\lesssim \|\zeta_0F u\|^2_{(k-2)/2}+\|\zeta_0 u\|^2\]
\noindent
for the following two reasons: First, 
\[|||\zeta_0F u|||^2_{(k-2)/2}\le\|\zeta_0F u\|^2_{(k-2)/2}\]
\noindent
since the latter differentiates in the normal direction and the former does not. Second, $\|\zeta_0F u\|^2_{(k-3)/2}\le\|\zeta_0F u\|^2_{(k-2)/2}$ obviously.\end{proof}
\begin{rem}{\rm
Lemma 2.4.6 needs real modification if we are to obtain a local statement; cutting off na\"ively: 
\[|||D\zeta_k  u|||^2_{(k-2)/2}\lesssim |||\zeta_1F u|||^2_{(k-2)/2}+\|\zeta_1F u\|^2\]
\noindent
is false! To see this, let $q$ be a function with small support near the origin and choose $\zeta_1$ so that $\zeta_1q=0$. Furthermore, let $u\in\ker(\square)^\perp$ solve $Fu=q$. Then the right-hand side of the inequality is zero while the left is not.

\medskip

\noindent
The following lemma corresponds to \cite{FK} (2.4.8). } \end{rem}

\begin{lemma}\label{bigapriori}Suppose the basic estimate holds in $\mathcal D^{p,q}$. Let $V$ be a special boundary chart on which the conclusions of Lemma \ref{2.4.4} hold. Let $U\subset\bar U\subset V$, and choose a real $\zeta_1\in\Lambda_0^{0,0}(V\cap\bar M)$ with $\zeta_1 = 1$ on $U$. Then for each real $\zeta\in\Lambda_0^{0,0}(V\cap\bar M)$, and each positive integer $s$.
\[\|\zeta u\|_{s+1}^2\lesssim \|\zeta_0F u\|_s^2+ \|\zeta_0 u\|^2, \]
\noindent
uniformly for $ u\in {\rm Dom}(F)\cap\mathcal D^{p,q}$.
\end{lemma}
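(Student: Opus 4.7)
The plan is to promote the purely tangential Sobolev estimate of Lemma \ref{biglem} to one in the full Sobolev norm $\|\cdot\|_{s+1}$, by using the equation $Fu=f$ to recover normal derivatives. This is possible because $bM$ is noncharacteristic for $\square$, so every pure normal derivative of order $\ge 2$ can be expressed algebraically via $F$, tangential derivatives and lower-order normal derivatives. I would induct on the positive integer $s$.

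In the special boundary chart $V$ with coordinates $(t_1,\ldots,t_{2n-1},\rho)$ the operator $F=\square+1$ has nonvanishing principal symbol on $d\rho$, so it may be written
\[F=a(t,\rho)D_\rho^2+B(t,\rho;D_{\mathbf t},D_\rho),\]
where $a$ is smooth and bounded below by a positive constant on $\mathrm{supp}(\zeta_0)$, and $B$ is a differential operator of total order $\le 2$ containing at most one $D_\rho$. Solving for $D_\rho^2u$ gives the pointwise identity $D_\rho^2u=a^{-1}(Fu-Bu)$, which is the mechanism by which two normal derivatives on $u$ are exchanged for one application of $F$ plus derivatives that are either of lower total order or contain strictly fewer normal differentiations. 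For the base case $s=1$, Lemma \ref{biglem} applied with $k=3$ together with \eqref{tangnorms} controls $|||\zeta_2 u|||_2^2+|||D_\rho\zeta_2 u|||_1^2$, hence every $H^2$-derivative of $\zeta_2u$ except the pure $D_\rho^2$; for the missing piece one applies $\zeta$ to the identity above, uses $\zeta=\zeta\zeta_2$, and invokes the tangential bound once more to obtain
\[\|\zeta D_\rho^2 u\|\le C\|\zeta Fu\|+C\|\zeta Bu\|\lesssim \|\zeta_0Fu\|+\|\zeta_0 u\|,\]
which closes $s=1$.

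For the inductive step, Lemma \ref{biglem} with $k=s+2$ delivers full $H^{s+1}$ control of those derivatives of $\zeta_{s+2}u$ containing at most one normal differentiation; any remaining derivative $D_{\mathbf t}^\alpha D_\rho^j$ with $j\ge 2$ and $|\alpha|+j=s+1$ is obtained by differentiating the identity $D_\rho^2u=a^{-1}(Fu-Bu)$ a total of $|\alpha|+j-2$ times and bounding every resulting commutator by the inductive hypothesis and by the tangential estimate at a slightly larger cutoff. The main obstacle is the cutoff and order bookkeeping: one must introduce a finite nested sequence $\zeta=\zeta_N\prec\zeta_{N-1}\prec\cdots\prec\zeta_0$ with $\zeta_{j-1}\equiv 1$ on $\mathrm{supp}(\zeta_j)$, arrange each commutator $[D_\rho,\zeta_{j+1}]$ or $[D_\rho^2,a^{-1}\zeta_{j+1}]$ arising from differentiating the identity to be absorbed against $\zeta_j$, and ensure that the Sobolev order demanded of $Fu$ never exceeds $s$ while that demanded of $u$ remains $0$. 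This is routine but tedious, and mirrors the argument of Lemma 2.4.8 of \cite{FK}, the only new content being the localized form of the tangential estimate supplied by Lemma \ref{biglem}.
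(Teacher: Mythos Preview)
Your strategy is the same as the paper's: use Lemma~\ref{biglem} to control all derivatives containing at most one $D_\rho$, and then recover the higher normal derivatives from the ellipticity identity $D_\rho^2 u = a^{-1}(Fu-Bu)$ by induction on the number of normal differentiations. That part is fine.

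There is, however, a real bookkeeping error in how you invoke Lemma~\ref{biglem}. That lemma gives
\[
|||D\zeta_k u|||^2_{(k-2)/2}\lesssim \|\zeta_0 Fu\|^2_{(k-2)/2}+\|\zeta_0 u\|^2,
\]
so the tangential Sobolev gain is $(k-2)/2$, not $k-1$. To control $|||D\zeta u|||_s^2$ (which, via \eqref{tangnorms}, is what is needed for the $\|\cdot\|_{s+1}$ norm) you must take $k=2s+2$, not $k=s+2$. In particular your base case with $k=3$ only yields $|||D\zeta_3 u|||_{1/2}^2$, which does \emph{not} control $|||\zeta_2 u|||_2^2+|||D_\rho\zeta_2 u|||_1^2$ as you claim; it controls order $3/2$ tangential plus order $1/2$ mixed. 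The paper in fact starts the induction at $s=0$ (using $k=2$), and at the inductive step builds a nested family $\zeta=\zeta_{2s+2}\prec\zeta_{2s+1}\prec\cdots\prec\zeta_0$ and applies Lemma~\ref{biglem} with $k=2s+2$. This also means roughly twice as many cutoffs are needed as you allow for. Once the indices are corrected the argument goes through exactly as you describe.
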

\begin{proof} Induction on $s$: For $s=0$, set $\zeta=\zeta_2$ and apply the previous lemma with $k=2$ and $0=\zeta_3=\zeta_4=\dots$
\[\|\zeta u\|_1^2\lesssim \|D\zeta_1 u\|^2\lesssim \|\zeta_0 F u\|^2+\|\zeta_0 u\|^2. \]
\noindent
Now assume the claim true for $s-1$. Then 
\[\|\zeta u\|_{s+1}\lesssim\sum_{|\beta|=s+1}\|D^\beta\zeta u\|^2+\|\zeta u\|^2_{s}\]
\[\|\zeta u\|_{s+1}\lesssim\sum_{|\beta|=s+1}\|D^\beta\zeta u\|^2+\|\zeta_1F u\|^2_{s-1}+\|\zeta_0 u\|^2\]
\begin{equation}\label{sofar}\|\zeta u\|_{s+1}\lesssim\sum_{|\beta|=s+1}\|D^\beta\zeta u\|^2+\|\zeta_1F u\|^2_{s}+\|\zeta_0 u\|^2,\end{equation}
\noindent
so estimate $\|D^\beta\zeta u\|^2$ for $|\beta|=s+1$. Construct a sequence of cutoffs $\{\zeta_k\}_2^{2s+1}$ so that $\zeta=\zeta_{2s+2}$ and $\zeta_k=1$ on ${\rm supp}\ \zeta_{k+1}$. Then apply Lemma \ref{biglem} with $k=2s+2$ and $\zeta_j=0$ for $j>2s+2$. Then
\begin{equation}\label{cat}\|D_{\bf t}^\beta\zeta u\|^2\lesssim |||D\zeta u|||_{s}^2\lesssim\|\zeta_1 F u\|_s^2 + \|\zeta_0 u\|^2. \end{equation}
\noindent
Thus we got part of the first term on the right of \eqref{sofar} estimated by the latter terms. For $|\beta|=s$ we have
\begin{equation}\label{dog}\|D_{\bf t}^\beta D_\rho\zeta u\|^2\lesssim |||D\zeta u |||_s^2  \lesssim \|\zeta_1 F u\|_s^2 + \|\zeta_0 u\|^2. \end{equation}
\noindent
It remains to estimate $D_{\bf t}^\beta D_\rho^m\zeta u$ with $|\beta|+m=s+1$, $m\ge 2$. Follow FK back to p 34, equation (2.3.5). Here $F$ is written in terms of differentiation with respect to the coordinates of the special boundary chart:
\[F u = A_0 D^2_\rho  u + \sum_{j=1}^{2n-1}A_j D_{\bf t}^jD_\rho u+\sum_{jk=1}^{2n-1}A_{jk}D_{\bf t}^jD_{\bf t}^k u+B_0D_\rho u+\sum_{j=1}^{2n-1}B_jD_{\bf t}^j u+C u.\]
\noindent
Since $F$ is an elliptic operator, the matrices $A$ are invertible. Thus we may solve
\[  D^2_\rho  u = -A_0^{-1}\left[-F u +\sum_{j=1}^{2n-1}A_j D_{\bf t}^jD_\rho u+\sum_{jk=1}^{2n-1}A_{jk}D_{\bf t}^jD_{\bf t}^k u+B_0D_\rho u+\sum_{j=1}^{2n-1}B_jD_{\bf t}^j u+C u\right].\]
\noindent
Applying $\zeta D_{\bf t}^\beta D_\rho^m$ with $|\beta|+m=s+1$, $m\ge 2$ and inserting a cutoff $\zeta_1$, we obtain
\[ \zeta D_{\bf t}^\beta D^m_\rho  u = -\zeta D_{\bf t}^\beta D^{m-2}_\rho A_0^{-1}\left[-\zeta_1 F u +\sum_{j=1}^{2n-1}A_j D_{\bf t}^jD_\rho u \ + \right.\]
\[\left.+\sum_{jk=1}^{2n-1}A_{jk}D_{\bf t}^jD_{\bf t}^k u+B_0D_\rho u+\sum_{j=1}^{2n-1}B_jD_{\bf t}^j u+C u\right].\]
\noindent
As in Folland and Kohn, at this point an induction on $m$ (commuting the $\zeta$ through) gives that $\zeta D_{\bf t}^\beta D^m_\rho  u$ is expressed in terms of derivatives of $\zeta_1 F u$ of order $s-1$ $(=|\beta| + m-2)$ and derivatives of $\zeta u$ which have been previously estimated in \eqref{cat} and \eqref{dog}.
\end{proof}

\section{Acknowledgments}

I thank my advisor Mikhail A Shubin for posing this problem and for giving me much advice.  I am also very grateful to Miroslav Engli\v s, Gerald Folland, Emil Straube, and Alex Suciu for helpful discussions.

\bigskip \bigskip

\end{document}